\dedicatory{}
\theoremstyle{plain}
\newtheorem{theorem}[equation]{Theorem}
\newtheorem{lemma}[equation]{Lemma}
\newtheorem{example}[equation]{Example}
\theoremstyle{definition}
\newtheorem{definition}[equation]{Definition}
\theoremstyle{remark}
\newtheorem{nonsec}[equation]{}
\numberwithin{equation}{section}
\newcommand{\diam}{\ensuremath{\textrm{diam}}}
\newcommand{\beq}{\begin{equation}}
\newcommand{\eeq}{\end{equation}}
\newcommand{\ben}{\begin{enumerate}}
\newcommand{\een}{\end{enumerate}}
\newcommand{\bequu}{\begin{eqnarray*}}
\newcommand{\eequu}{\end{eqnarray*}}
\newcommand{\bequ}{\begin{eqnarray}}
\newcommand{\eequ}{\end{eqnarray}}
\DeclareMathOperator{\dist}{dist}
\newcounter{minutes}\setcounter{minutes}{\time}
\newcounter{hours}\setcounter{hours}{\time}
\renewcommand{\thefootnote}{\number_style{footnote}}
\begin{document}

\def\thefootnote{}
\footnotetext{ \texttt{\tiny File:~\jobname .tex,
           printed: \number\year-\number\month-\number\day,
           \thehours.\ifnum\theminutes<10{0}\fi\theminutes}
} \makeatletter\def\thefootnote{\@arabic\c@footnote}\makeatother

\title{Apollonian metric, uniformity and Gromov hyperbolicity}

\author{Yaxiang  Li}
\address{Yaxiang Li,  Department of Mathematics, Hunan First Normal University, Changsha,
Hunan 410205, P.R.China}\address{Hunan Provincial Key Laboratory of Mathematical Modeling and Analysis in Engineering, Changsha University of Science and Technology, Changsha 410114, Hunan, P.R.China}
\email{yaxiangli@163.com}

\author[]{Matti Vuorinen}
\address{Matti Vuorinen, Department of Mathematics and Statistics, University of Turku,
FIN-20014 Turku, Finland}
\email{vuorinen@utu.fi}

\author{Qingshan Zhou${}^{\mathbf{*}}$}
\address{Qingshan Zhou, school of mathematics and big data, foshan university,  Foshan, Guangdong 528000, People's Republic
of China} \email{q476308142@qq.com}

\date{}
\subjclass[2000]{Primary: 30C65, 30F45; Secondary: 30C20} \keywords{
Roughly Apollonian bilipschitz mappings, uniform domains, Apollonian metric, Gromov hyperbolicity.\\
${}^{\mathbf{*}}$ Corresponding author}

\begin{abstract}The main purpose of this paper is to investigate the properties of a mapping which is required to be  roughly bilipschitz with respect to the Apollonian metric (roughly Apollonian bilipschitz) of its domain. We prove that under these mappings
 the uniformity, $\varphi$-uniformity and $\delta$-hyperbolicity (in the sense of Gromov with respect to quasihyperbolic metric) of proper domains of $\mathbb{R}^n$ are invariant.  As applications, we give four equivalent conditions for a quasiconformal
mapping which is defined on  a uniform domain to be roughly Apollonian
bilipschitz, and we conclude that $\varphi$-uniformity is invariant under quasim\"obius mappings.
\end{abstract}


\thanks{The research was partly supported  by NNSF of
China (Nos. 11601529, 11671127) and Hunan Provincial Key Laboratory of Mathematical Modeling and Analysis in Engineering ( No.2017TP1017).}

\maketitle{} \pagestyle{myheadings} \markboth{}{Apollonian metric, uniformity and Gromov hyperbolicity}

\section{Introduction and main results}\label{sec-1}
In geometric function theory, one mainly investigates the interplay between analytic properties of mappings and geometric properties of sets and domains. A key question is how to measure the
distance between two points $x,y$ in a proper subdomain $G \subset {\mathbb R}^n\,.$ Instead of
using distance functions which measure the position of the points with respect to each other, such as Euclidean and chordal metrics,  it is more useful to take into account also
 the position of the points with respect to the boundary of the domain. Many authors have used this idea  to define metrics of hyperbolic type and to study the geometries defined by these metrics in domains. Some examples are
 the quasihyperbolic metric, Apollonian metric, the distance ratio metric, Seittenranta's metric, see \cite{Be, GH, GO, H2, S, V2}. In particular, the quasihyperbolic metric has become a basic tool in geometric function theory and it has many important applications \cite{GH,Vai1}.

Suppose that we are given a domain $G \subset {\mathbb R}^n$ and two metrics $m_1$ and $m_2$ on it. It is natural to study whether or not these metrics are comparable in some sense. It turns out that the comparison properties of metrics imply geometric properties of the domain: this idea was used by Gehring and Osgood \cite{GO} to characterise
so called uniform domains, by Gehring and Hag \cite{GH1} to study quasidisks, by Vuorinen \cite{V2} to define $\varphi$-uniform domains,
by H\"ast\"o \cite{H1} to study comparison properties of so called Apollonian metric.    Seittenranta \cite{S} defined a M\"obius invariant
metric on subdomains of ${\mathbb{R}^n }$ and, comparing this metric to Ferrand's metric, defined a M\"obius invariant class of domains. In the general case, we could call domains with such a comparison property $(m_1,m_2)$-uniform domains. Uniform domains and quasidisks form
 classes of domains, which have been studied by many authors. In spite of all this work, there are many pairs of function theoretically interesting metrics $m_1,m_2$, for
which practically nothing is known about $(m_1,m_2)$-uniform domains.

One of the key features of hyperbolic type metrics is the Gromov hyperbolicity property. It is well-known that the Gehring-Osgood $\widetilde{j}$-metric and the quasihyperbolic metric of uniform domains are Gromov hyperbolic. We note that H\"{a}st\"{o} in \cite{H3} proved that the $\widetilde{j}$-metric is always Gromov hyperbolic, but the  $j$-metric  is Gromov hyperbolic if and only if $G$ has exactly one boundary point. In fact, in $\mathbb{R}^n$, many results in quasiconformal mappings can be explained through negative curvature, or ``Gromov hyperbolicity". It would be interesting to know, what the precise relationship between the higher dimensional quasiconformal theory and the work of Gromov is. On the other hand, Gromov hyperbolicity for metric spaces is a coarse notion of negative curvature which yields a very satisfactory theory. It is natural to consider the properties of coarse maps with respect to the hyperbolic type metrics and the geometry of domains. In the spirit of this motivation, we mainly study a class of mappings which are roughly bilipschitz with respect to the Apollonian metric in $\mathbb{R}^n$.

In fact, the study of Apollonian metric and  the so called Apollonian bilipschitz mapping, (i.e., bilipschitz mapping with respect to Apollonian metric)  has been largely motivated and considered by questions about Apollonian isometries, which in turn was a continuation of work by Beardon \cite{Be}, Gehring and Hag \cite{GH}, H\"{a}st\"{o} and his  collaborators \cite{H1,H2, HK, HI}. In order to make this paper more readable, we review some notations from \cite{V2} and \cite{H1}.

We will consider domains (open connected non-empty sets) $G$  in the M\"{o}bius space $\overline{\mathbb{R}^n}=\mathbb{R}^n\cup \{\infty\}$. The {\it Apollonian metric} is defined by
$$\alpha_G(x,y):=\log\sup_{a,b\in \partial G}|a,y,x,b| \;\;\;\;\;\;\;\;\;\; \mbox{where}\;|a,y,x,b|=\frac{|a-x||b-y|}{|a-y||b-x|},$$
for $x,y\in G\subsetneq \mathbb{R}^n$ with understanding that if $a=\infty$ then we set $|a-x|/|a-y|=1$ and similarly for $b$. It is in fact a metric if   $\partial G$ is not contained in a hyperplane or sphere, as was noted by \cite[Theorem $1.1$]{Be}.

In the paper \cite{GH} Gehring and Hag proved that a quasi-disk is invariant under a quasiconformal mapping which is also  Apollonian bilipschitz. Along this line, H\"{a}st\"{o} \cite{H1}  introduced  $A$-uniform domains:  A domain $G\subsetneq \mathbb{R}^n$ is said to be {\it $A$-uniform  with constant $A_1$ if  for some constant $A_1>0$ and for every $x,y$ $\in G$, we have $k_G(x,y)\leq A_1 \alpha_G(x,y)$, where $k_G(x,y)$ is the quasihyperbolic metric (for definition see Subsection 2.2) between $x$ and $y$ in $G$. A domain $G\subsetneq \mathbb{R}^n$ is said to be  $A$-uniform if it is $A$-uniform with some constant $A_1<\infty$.} In particular, he proved the following result:

\begin{theorem}\label{z-1}$($\cite[Theorem $1.8$]{H1}$)$ Let $G\subsetneq \mathbb{R}^n$ be $A$-uniform and let $f:G\to G' \subsetneq\mathbb{R}^n$ be an Apollonian bilipschitz mapping. The following conditions are equivalent:
\begin{enumerate}
\item $G'$ is $A$-uniform;
\item $f$ is quasiconformal in $G$.
\end{enumerate}
\end{theorem}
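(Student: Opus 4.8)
The plan is to reduce both implications to a single statement: under the standing hypotheses, $f$ is bilipschitz for the quasihyperbolic metrics, i.e.\ there is $C\ge 1$ with $C^{-1}k_G(x,y)\le k_{G'}(f(x),f(y))\le C\,k_G(x,y)$ for all $x,y\in G$. Two standard facts will drive everything. First, every domain satisfies $\alpha_G\le 2k_G$ (equivalently the infinitesimal Apollonian density obeys $\alpha_G^*(x)\le 2/d_G(x)$, where $d_G(x)=\dist(x,\partial G)$); hence $A$-uniformity of $G$ is exactly the two-sided comparison $k_G/A_1\le\alpha_G\le 2k_G$, and letting $y\to x$ it also yields $\alpha_G^*(x)\asymp 1/d_G(x)$. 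Second, a homeomorphism that is bilipschitz for the quasihyperbolic metrics is quasiconformal \cite{Vai1}. So it suffices to show that $(1)$ forces the quasihyperbolic bilipschitz property and that $(2)$ conversely forces $A$-uniformity of $G'$.

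For $(1)\Rightarrow(2)$ the comparison is immediate. Assuming $G'$ is $A$-uniform with constant $A_1'$ and that $f$ is $L$-Apollonian bilipschitz, I would chain
\[
k_{G'}(f(x),f(y))\le A_1'\,\alpha_{G'}(f(x),f(y))\le A_1'L\,\alpha_G(x,y)\le 2A_1'L\,k_G(x,y),
\]
using $A$-uniformity of $G'$, the upper Apollonian bilipschitz bound, and $\alpha_G\le 2k_G$. Symmetrically, using $\alpha_{G'}\le 2k_{G'}$, the lower Apollonian bilipschitz bound, and $A$-uniformity of $G$,
\[
k_G(x,y)\le A_1\,\alpha_G(x,y)\le A_1L\,\alpha_{G'}(f(x),f(y))\le 2A_1L\,k_{G'}(f(x),f(y)).
\]
Thus $f$ is quasihyperbolic bilipschitz with constant $C=2L\max\{A_1,A_1'\}$, and the second fact gives quasiconformality.

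For $(2)\Rightarrow(1)$ the lower estimate above never used quasiconformality, so I always have $k_{G'}(f(x),f(y))\ge(2A_1L)^{-1}k_G(x,y)$; what remains is the matching upper bound $k_{G'}(f(x),f(y))\le C\,k_G(x,y)$. Granting it, since $G$ is $A$-uniform and $f$ is Apollonian bilipschitz, $k_G(x,y)\le A_1\alpha_G(x,y)\le A_1L\,\alpha_{G'}(f(x),f(y))$, whence $k_{G'}(f(x),f(y))\le CA_1L\,\alpha_{G'}(f(x),f(y))$, which is exactly $A$-uniformity of $G'$. For \emph{large} distances the upper bound is Gehring--Osgood \cite{GO}: a $K$-quasiconformal $f$ satisfies $k_{G'}(f(x),f(y))\le c(n,K)\max\{k_G(x,y),k_G(x,y)^{1/K}\}$, which is $\le c(n,K)\,k_G(x,y)$ once $k_G(x,y)\ge 1$.

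The genuine difficulty, and the step I expect to be the main obstacle, is the \emph{small}-distance regime $k_G(x,y)<1$, where the Gehring--Osgood bound degrades to the H\"older term $k_G^{1/K}$ and is useless: for a merely quasiconformal map $k_{G'}\circ f$ can outgrow $k_G$ at small scales, and a domain with a tightly clustered piece of boundary (where $\alpha_{G'}^*\,d_{G'}$ is small) fails to be $A$-uniform precisely there. The role of the two hypotheses together is to forbid this. Passing to the infinitesimal form, Apollonian bilipschitzness reads $\alpha_{G'}^*(f(x))\,|f(x)-f(y)|\asymp\alpha_G^*(x)\,|x-y|$, while the quasisymmetry of a quasiconformal map gives $|f(x)-f(y)|\asymp\|Df(x)\|\,|x-y|$ and $d_{G'}(f(x))\asymp\|Df(x)\|\,d_G(x)$, with constants depending only on $n,K$. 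Multiplying, the factors $\|Df(x)\|$ cancel and one gets
\[
\alpha_{G'}^*(f(x))\,d_{G'}(f(x))\asymp\alpha_G^*(x)\,d_G(x)\asymp 1,
\]
the last comparison because $G$ is $A$-uniform. This is $\alpha_{G'}^*\gtrsim 1/d_{G'}$, i.e.\ the local bound $k_{G'}(f(x),f(y))\le C\,k_G(x,y)$ for close points, which combined with the large-scale case closes the argument. The hard part is making this density transfer rigorous: it is where quasiconformality is indispensable — a bare homeomorphism could cluster the boundary of $G'$ and destroy $A$-uniformity — and it requires the quantitative quasisymmetric distortion estimates for $d_{G'}\circ f$ together with the a.e.\ differentiability of $f$ to justify the $\|Df\|$-cancellation.
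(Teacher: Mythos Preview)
This theorem is not proved in the present paper: it is quoted verbatim from H\"ast\"o \cite[Theorem~1.8]{H1} and used as a black box (in the proof of Theorem~\ref{cor}). There is therefore no ``paper's own proof'' to compare your proposal against.

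As an independent assessment: your argument for $(1)\Rightarrow(2)$ is correct and complete. Chaining $A$-uniformity of $G'$, the Apollonian bilipschitz bound, and $\alpha_G\le 2k_G$ (and symmetrically with the roles of $G,G'$ reversed) gives quasihyperbolic bilipschitzness, and from there quasiconformality follows from the standard local distortion estimate for $k$. This is in fact exactly the route H\"ast\"o takes in \cite{H1} via his Corollary~5.4, and it is also the mechanism behind the implication $(2)\Rightarrow(3)$ in the paper's Theorem~\ref{cor}.

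For $(2)\Rightarrow(1)$ your strategy is the right one and you correctly isolate the genuine difficulty (the small-scale upper bound $k_{G'}\circ f\lesssim k_G$, where Gehring--Osgood alone degenerates). One point to watch in your density-transfer sketch: the Apollonian infinitesimal density $\alpha_{G'}^*$ is in general \emph{direction-dependent}, and you do not yet know $G'$ is quasi-isotropic. So the line ``$\alpha_{G'}^*(f(x))\,|f(x)-f(y)|\asymp\alpha_G^*(x)\,|x-y|$'' must be read directionally; combining it with the $K$-quasiconformality (which controls the directional spread of $|Df(x)v|$) and the quasi-isotropy of $G$ does yield $\alpha_{G'}^*(f(x);w)\asymp 1/d_{G'}(f(x))$ uniformly in $w$, but you should make this explicit. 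The a.e.\ differentiability issue you flag is handled in H\"ast\"o's original argument by working not with $Df$ but with the metric linear dilatation together with the quasisymmetric egg-yolk estimate, which avoids pointwise derivatives altogether; that would tighten your sketch into a proof.
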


We note that H\"{a}st\"{o} in \cite[Proposition 6.6]{H1} proved that a domain $G$ is $A$-uniform if and only if $G$ is $L$-quasi-isotropic (for definition see Subsection 2.19) and $\alpha_G$ is quasiconvex. So in this paper, we first complement Theorem \ref{z-1} in the following way.

\begin{theorem}\label{cor}
Let $G\subsetneq \mathbb{R}^n$ be $A$-uniform and let $f:G\to G' \subsetneq\mathbb{R}^n$ be an Apollonian bilipschitz mapping. Then the following conditions are equivalent:
\begin{enumerate}
  \item $G'$ is $A$-uniform;
  \item $G'$ is $L$-quasi-isotropic;
  \item $f$ is quasiconformal in $G$;
  \item $f$ is quasim\"{o}bius in $G$.
\end{enumerate}
\end{theorem}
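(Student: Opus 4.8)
The plan is to slot the two new conditions, (2) and (4), into the equivalence (1)$\Leftrightarrow$(3) that Theorem \ref{z-1} already supplies. Thus it suffices to prove (1)$\Leftrightarrow$(2) and, say, (3)$\Leftrightarrow$(4), after which all four conditions are linked through the backbone (1)$\Leftrightarrow$(3).

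For (1)$\Leftrightarrow$(2) I would invoke the characterisation of \cite[Proposition 6.6]{H1}, that a domain is $A$-uniform exactly when it is $L$-quasi-isotropic \emph{and} its Apollonian metric is quasiconvex. Since $G$ is $A$-uniform, $\alpha_G$ is quasiconvex. The crucial observation is that an Apollonian bilipschitz homeomorphism $f\colon(G,\alpha_G)\to(G',\alpha_{G'})$ transports quasiconvexity: the $f$-image of a curve almost realising the $\alpha_G$-distance almost realises the $\alpha_{G'}$-distance, up to the bilipschitz constant, so $\alpha_{G'}$ is quasiconvex as well. Applying Proposition 6.6 to $G'$, whose quasiconvexity hypothesis is now automatic, reduces its $A$-uniformity to its $L$-quasi-isotropy, which is precisely (1)$\Leftrightarrow$(2). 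The only thing to verify is the short metric-space lemma that bilipschitz maps preserve quasiconvexity.

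The implication (4)$\Rightarrow$(3) is standard and I would simply cite it: a quasim\"obius homeomorphism between domains of $\mathbb{R}^n$ is locally quasisymmetric and hence quasiconformal (V\"ais\"al\"a's theory of quasim\"obius maps). The reverse implication (3)$\Rightarrow$(4) is where I expect the main obstacle. Here I would first use the already-available (3)$\Leftrightarrow$(1) to conclude that both $G$ and $G'$ are $A$-uniform. On any $A$-uniform domain one has $\alpha\asymp k$: the universal bound $\alpha_G\le 2k_G$ always holds (via $\alpha_G\le 2j_G\le 2k_G$), while $A$-uniformity supplies the reverse $k_G\le A_1\alpha_G$. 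Feeding these two comparisons into the hypothesis that $f$ is Apollonian bilipschitz upgrades $f$ to a quasihyperbolic bilipschitz map, $k_{G'}(f(x),f(y))\asymp k_G(x,y)$.

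The delicate remaining step is to deduce from this the quasim\"obius cross-ratio inequality for four \emph{arbitrary} points of $G$. The quasiconformality of $f$ controls the distortion of cross-ratios of points lying close together (local quasisymmetry), whereas the Apollonian bilipschitz condition controls the boundary cross-ratios entering the definition of $\alpha_G$ and $\alpha_{G'}$; the difficulty is to combine this interior control, the large-scale boundary control, and the induced boundary correspondence $\partial G\to\partial G'$ into a single $\eta$-distortion estimate valid for every quadruple. I expect this patching of local and large-scale information, rather than any isolated computation, to be the crux of the argument; once it is in place, (3)$\Rightarrow$(4) follows and the four conditions are fully equivalent.
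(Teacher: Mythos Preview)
Your plan for $(1)\Leftrightarrow(2)$ is correct and, in fact, a bit cleaner than the paper's route. The paper obtains $(1)\Rightarrow(2)$ from \cite[Proposition~6.6]{H1} just as you do, but for the converse it passes through $(3)$: it uses \cite[Corollary~5.4]{H1} together with the quasi-isotropy of $G'$ to upgrade $f$ to a quasihyperbolic bilipschitz map, then bounds the linear dilatation directly to get quasiconformality, and finally appeals to Theorem~\ref{z-1} for $(3)\Rightarrow(1)$. Your observation that an Apollonian bilipschitz homeomorphism transports quasiconvexity of $\alpha_G$ to $\alpha_{G'}$, so that Proposition~6.6 reduces $A$-uniformity of $G'$ to its $L$-quasi-isotropy outright, gives $(2)\Rightarrow(1)$ without this detour. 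Both arguments ultimately rest on Proposition~6.6; yours simply uses it twice instead of once.

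Where you go wrong is in $(3)\Rightarrow(4)$. You correctly note that, once $(3)$ holds, Theorem~\ref{z-1} makes $G'$ $A$-uniform as well. But then you try to manufacture the quasim\"obius estimate by hand, anticipating a ``delicate patching'' of local quasisymmetry with boundary cross-ratio control, and you leave this step open. There is no crux here. $A$-uniform domains are uniform, and V\"ais\"al\"a's \cite[Theorem~5.6]{Vai2} says precisely that a quasiconformal map between uniform domains in $\mathbb{R}^n$ is quasim\"obius. That single citation closes the implication; the paper does exactly this. Your quasihyperbolic-bilipschitz detour and the proposed local--global patching are unnecessary and, as stated, not a proof. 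For $(4)\Rightarrow(3)$ you and the paper agree: cite \cite[Theorem~5.2]{Vai2}.
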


Furthermore, it follows from \cite[Example $4.4$ and Proposition $6.6$]{H1} that the class of $A$-uniform domains is a proper subset of the class of uniform domains  (see Subsection 2.2 for the definition) and thus a proper subset of the class of $\varphi$-uniform domains (for definition see Subsection 2.2). It is a natural question to consider whether or not there is an analogous result for uniform or $\varphi$-uniform domains as stated in Theorem \ref{z-1}. In particular,

{\it are uniform or $\varphi$-uniform domains preserved by an Apollonian bilipschitz mapping which is also quasiconformal?}

The main purpose of this paper is to deal with this question and we obtain that the uniformity, $\varphi$-uniformity and $\delta$-hyperbolicity (in the sense of Gromov with respect to quasihyperbolic metric, for definition see Subsection 2.6) of proper domains of $\mathbb{R}^n$ are invariant under roughly Apollonian bilipschitz mappings (see Subsection 2.11 for the definition) as follows.

\begin{theorem}\label{thm-1}
Let $G\subsetneq \mathbb{R}^n$ be a domain and let  $f:G\to G'\subsetneq\mathbb{R}^n$
be an  $(M,C)$-roughly Apollonian bilipschitz mapping. Then we have
\begin{enumerate}
\item If $G$ is $c$-uniform, then $G'$ is $c_1$-uniform  with $c_1$ depending only on $c$, $n$, $C$ and $M$;
\item If $G$ is $\varphi$-uniform, then $G'$ is $\varphi'$-uniform with $\varphi'$ depending only on $\varphi$, $n$, $C$ and $M$;
\item If $G$ is $\delta$-hyperbolic, then $G'$ is $\delta'$-hyperbolic with $\delta'$ depending only on $\delta$, $n$, $C$ and $M$.
\end{enumerate}
\end{theorem}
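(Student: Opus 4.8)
The plan rests on pinning down how the Apollonian metric sees the quasihyperbolic and distance--ratio metrics at different scales. Write $j_G$ for the distance ratio metric and $d(x)=\dist(x,\partial G)$, and recall the universally valid inequalities $\alpha_G(x,y)\le 2\,j_G(x,y)\le 2\,k_G(x,y)$ and $j_G\le k_G$; recall too that $c$-uniformity is the comparison $k_G\le c\,j_G$, that $\varphi$-uniformity is $k_G(x,y)\le\varphi\big(|x-y|/\min\{d(x),d(y)\}\big)$, and that $\delta$-hyperbolicity is $\delta$-thinness of the Gromov product of $(G,k_G)$. The first thing I would record is a matching lower bound at large scale: inserting into the defining supremum the nearest boundary point $a$ to $y$ and $b$ to $x$ gives
\[
\alpha_G(x,y)\ \ge\ \log\frac{(|x-y|-d(y))\,(|x-y|-d(x))}{d(x)\,d(y)},
\]
so that $\alpha_G$ and $j_G$ are comparable up to a bounded additive error whenever $|x-y|\gtrsim\max\{d(x),d(y)\}$. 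Thus $\alpha_G\simeq j_G$ coarsely in every domain, and the only place where $\alpha_G$ may collapse below $j_G$ is a local, asymmetric range of scales.

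Consequently the $(M,C)$-roughly Apollonian bilipschitz map $f$ coarsely preserves the distance ratio geometry at scales where the separation dominates the boundary distances. For parts (1) and (2) I would feed this into a Gehring--Osgood style analysis: on a uniform domain $k_G\simeq j_G$, so at such scales the quasihyperbolic geometry of $G$ is transported by $f$; to obtain an \emph{upper} bound for $k_{G'}$ I would argue through the inner (Gehring--Osgood) metric $\widetilde j$ rather than transporting curves, since $f$ need not be continuous, and so recover the comparison $k_{G'}\le c_1 j_{G'}$ defining $c_1$-uniformity. The same scheme, with $\log(1+r)$ replaced by the general gauge, yields $\varphi$-uniformity, the new gauge $\varphi'$ arising as $\varphi$ composed with the affine distortion $t\mapsto Mt+C$ and the comparison constants, and depending only on $\varphi,n,M,C$.

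For part (3) the quasihyperbolic metric need no longer be comparable to $\alpha_G$, since $G$ is only assumed Gromov hyperbolic and not uniform, and on a non-uniform domain $k_G$ can be far larger than $j_G\simeq\alpha_G$. Here I would use that Gromov hyperbolicity is a coarse (rough quasi-isometry) invariant and that $f$ is a genuine rough quasi-isometry in the Apollonian, hence coarsely in the distance--ratio, metric. Since by \cite{H3} the inner distance--ratio metric $\widetilde j$ is always Gromov hyperbolic, the task reduces to two same-domain comparisons: relating $\delta$-hyperbolicity of $(G,k_G)$ to the coarse $j$/$\widetilde j$ structure that $f$ transports, and reading the conclusion back on $G'$. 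I would carry these out using stability of quasihyperbolic geodesics and Gromov-product estimates, keeping $\delta'$ dependent only on $\delta,n,M,C$.

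The principal obstacle, shared by all three parts, is exactly this local and asymmetric decoupling of $\alpha_G$ from $k_G$: the nearest-boundary-point bound controls $\alpha_G$ from below only when $|x-y|$ dominates both $d(x)$ and $d(y)$, and precisely in the complementary regime (for instance $d(x)\ll|x-y|\approx d(y)$, or when $\partial G$ lies locally in special position) the Apollonian metric can collapse far below $k_G$ --- this is why $A$-uniform domains form a proper subclass of uniform domains. The crux is therefore to show that, under the hypotheses on $G$, this collapse is confined to a controlled range and so enters only through the additive constants; I would do this by combining the displayed lower bound with the stability of quasihyperbolic geodesics (and, for (3), with the coarse invariance of Gromov hyperbolicity through the always-hyperbolic metric $\widetilde j$ of \cite{H3}). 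The remaining work is the quantitative bookkeeping of the dependence of $c_1$, $\varphi'$ and $\delta'$ on $n$, $M$, $C$ and on $c$ or $\varphi$.
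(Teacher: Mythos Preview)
Your proposal has a genuine gap: you never establish the one fact on which the whole theorem hinges, namely that an $(M,C)$-roughly Apollonian bilipschitz homeomorphism is automatically $(M',C')$-coarsely quasihyperbolic (CQH), with $(M',C')$ depending only on $M,C,n$. The paper proves this as a separate lemma, and the mechanism is precisely the ingredient you are missing: every proper subdomain of $\mathbb{R}^n$ is $\psi_n$-\emph{natural} (Vuorinen), meaning $k_G(A)\le\psi_n(r_G(A))$ for every connected $A$. One then localises: for $x,y$ with $k_G(x,y)\le\tfrac18$ the segment $[x,y]$ has small relative diameter, the Apollonian bilipschitz bound transports this to a bound on $r_{G'}(f([x,y]))$, and naturality converts that into a bound on $k_{G'}(f(x),f(y))$. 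Since $(G,k_G)$ is geodesic this local estimate globalises to CQH. This is also where the dependence on $n$ in $c_1,\varphi',\delta'$ enters, which your sketch never explains.

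Without CQH your route stalls at the same point in all three parts: you can transport $\alpha$ (hence coarsely $j$) from $G$ to $G'$, but you have no mechanism to bound $k_{G'}$ from above. Passing to the inner metric $\widetilde j_{G'}$ does not help, because $\widetilde j_{G'}\asymp k_{G'}$ is exactly the quantity you cannot yet control; the Apollonian hypothesis gives no information about lengths of image curves, so there is no path in $G'$ whose $j$-density integral you can estimate. For part~(3) the difficulty is sharper still: H\"ast\"o's result that $(G',\widetilde j_{G'})$ is always Gromov hyperbolic says nothing about $(G',k_{G'})$ unless these two metrics are roughly quasi-isometric, which is essentially uniformity of $G'$ and is not assumed. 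Once CQH is in hand, by contrast, part~(3) is immediate (Gromov hyperbolicity is a rough quasi-isometry invariant) and parts~(1)--(2) reduce to the chain $k_{G'}\le M'k_G+C'\le M'\,c\,j_G+\ldots\le M'\,c\,(2Mj_{G'}+C+\log 3)+\ldots$, using $j_G\le\alpha_G+\log 3$ on both sides. Two minor points: $f$ \emph{is} a homeomorphism by definition, so ``$f$ need not be continuous'' is incorrect; and your displayed lower bound for $\alpha_G$ is essentially Seittenranta's inequality $j_G\le\alpha_G+\log 3$, which the paper simply quotes.
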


We remark that in Theorem \ref{thm-1} the quasiconformality for the maps is not needed. Next, as an application of Theorem \ref{thm-1} we shall demonstrate four equivalence conditions for a quasiconformal mapping which is defined on a uniform domain  to be roughly Apollonian bilipschitz.

%

\begin{theorem}\label{thm-2}
Let $G\subsetneq \mathbb{R}^n$ be a uniform domain and let $f:G\to G'\subsetneq \mathbb{R}^n$ be a quasiconformal mapping. Then the following conditions are equivalent:
\begin{enumerate}
  \item $f$ is  a roughly Apollonian bilipschitz mapping in $G$;
  \item $G'$ is uniform;
  \item $f:\overline{G}\to \overline{G'}$ is a homeomorphism and $f|_{\partial G}$ is quasim\"{o}bius;
  \item $f$ is quasim\"{o}bius in $G$.
\end{enumerate}
\end{theorem}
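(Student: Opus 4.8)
The plan is to prove the four conditions equivalent by running the cyclic chain $(1)\Rightarrow(2)\Rightarrow(3)\Rightarrow(4)\Rightarrow(1)$, keeping throughout the standing hypotheses that $G$ is uniform and $f$ is quasiconformal.

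The first implication $(1)\Rightarrow(2)$ is immediate, being exactly Theorem \ref{thm-1}(1): a roughly Apollonian bilipschitz image of a uniform domain is uniform. For $(2)\Rightarrow(3)$ I would appeal to the theory of quasiconformal mappings between uniform domains. With $G$ and $G'$ both uniform, $f$ is controlled in the quasihyperbolic metrics in the sense of Gehring--Osgood, and the quasihyperbolic metric of a uniform domain is Gromov hyperbolic with Gromov boundary identified with the Euclidean boundary. This forces $f$ to extend to a homeomorphism $\overline f\colon\overline G\to\overline{G'}$ whose boundary values form a quasim\"obius map, which is precisely $(3)$. The step $(3)\Rightarrow(4)$ then upgrades this boundary information to the interior: a quasiconformal map on a uniform domain whose boundary extension is quasim\"obius is itself quasim\"obius on $\overline G$, and in particular $f$ is quasim\"obius in $G$; here uniformity of $G$ is what permits interior points to be controlled by boundary data.

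The closing implication $(4)\Rightarrow(1)$ is where the Apollonian metric re-enters, and it rests on the fact that $\alpha_G$ is assembled entirely out of cross ratios. A quasim\"obius map of $G$ extends quasim\"obiusly to $\overline G$, so $\overline f$ carries $\partial G$ onto $\partial G'$ and there is an increasing homeomorphism $\eta$ with $|f(a),f(y),f(x),f(b)|\le\eta\bigl(|a,y,x,b|\bigr)$ for all $a,b\in\partial G$ and $x,y\in G$. Taking the supremum over $a,b$ and using monotonicity of $\eta$ gives $e^{\alpha_{G'}(f(x),f(y))}\le\eta\bigl(e^{\alpha_G(x,y)}\bigr)$, and the same estimate applied to the (again quasim\"obius) inverse $f^{-1}$ yields the reverse bound. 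To turn these into the additive-multiplicative form demanded by being roughly Apollonian bilipschitz, I would invoke the power-type improvement of the control function available for quasim\"obius maps of connected doubling sets, namely $\eta(t)\le C\max\{t^{M},t^{1/M}\}$, so that $\log\eta(e^{s})\le Ms+\log C$; this converts the two inequalities above into the required two-sided estimate, with constants depending only on $n$ and the dilatation $K$.

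The main obstacle is the transition from the analytic hypothesis to the M\"obius-geometric one, that is, the core of $(2)\Rightarrow(3)\Rightarrow(4)$: establishing that a quasiconformal map of uniform domains is quasim\"obius and extends quasim\"obiusly to the boundary. This is exactly where uniformity is indispensable and where the Gromov-hyperbolic description of the quasihyperbolic metric has to be used in earnest. By comparison $(1)\Rightarrow(2)$ is immediate from Theorem \ref{thm-1}, and $(4)\Rightarrow(1)$ is essentially cross-ratio bookkeeping, its only subtle ingredient being the power-type sharpening of the quasim\"obius gauge.
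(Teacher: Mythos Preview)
Your proposal is correct, but it is organized differently from the paper and leans on heavier machinery at one step.

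The paper does not run the full cycle through $(3)$. Instead it proves $(1)\Rightarrow(2)\Rightarrow(4)\Rightarrow(1)$ and then handles $(3)\Leftrightarrow(4)$ separately. The implication $(1)\Rightarrow(2)$ is Theorem~\ref{thm-1}(1), exactly as you say. For $(2)\Rightarrow(4)$ the paper simply quotes V\"ais\"al\"a's classical result \cite[Theorem~5.6]{Vai2} that a quasiconformal map between two uniform domains in $\mathbb{R}^n$ is quasim\"obius; no Gromov-boundary theory is needed. For $(4)\Rightarrow(1)$ the paper invokes Lemma~\ref{lem-4-1}, whose proof is precisely the cross-ratio bookkeeping plus power-type sharpening (via the inversion trick and \cite[Corollary~3.12]{TV}) that you sketched. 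Finally $(4)\Rightarrow(3)$ is \cite[Theorem~3.19]{Vai2} and $(3)\Rightarrow(4)$ is \cite[Theorem~3.15]{Vai-3}.

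Your route $(2)\Rightarrow(3)$ via the Bonk--Heinonen--Koskela identification of the Gromov boundary of $(G,k_G)$ with $\partial G$ is valid, but it replaces a one-line citation of \cite[Theorem~5.6]{Vai2} by a substantially heavier apparatus; in effect you are reproving that V\"ais\"al\"a theorem through \cite{BHK}. The advantage of your formulation is conceptual transparency about \emph{why} uniformity matters (Gromov hyperbolicity of $k_G$ and the boundary correspondence), while the paper's approach is shorter and stays entirely within V\"ais\"al\"a's quasim\"obius framework. Your step $(3)\Rightarrow(4)$ matches the paper's citation of \cite{Vai-3}, and your $(4)\Rightarrow(1)$ is essentially the proof of Lemma~\ref{lem-4-1} written out in place.
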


Moreover,   one can obtain the following invariance of $\varphi$-uniformity of domains in $\mathbb{R}^n$ under quasim\"obius mappings. Recently,  H\"{a}st\"{o}, Kl\'{e}n, Sahoo and Vuorinen \cite{HKSV}  studied the geometric properties of $\varphi$-uniform domains in $\mathbb{R}^n$. They proved that $\varphi$-uniform domains are preserved under quasiconformal mappings of $\mathbb{R}^n$. We restate this result in a stronger form which is more practical to check as follows.

\begin{theorem}\label{thm-3}Let $G\subsetneq \mathbb{R}^n$ be a $\varphi$-uniform domain and let $f:G\to G'\subsetneq\mathbb{R}^n$ be a $\theta$-quasim\"{o}bius homeomorphism. Then $G'$ is $\varphi'$-uniform with $\varphi'$ depending only on $\varphi$, $\theta$ and $n$.
\end{theorem}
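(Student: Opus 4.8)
The plan is to reduce the statement to a single comparison inequality for the quasihyperbolic metric and to control that inequality through the quasiconformality hidden inside any quasimöbius homeomorphism. Recall that $G$ being $\varphi$-uniform means $k_G(x,y)\le\varphi(r_G(x,y))$ for all $x,y\in G$, where $r_G(x,y)=|x-y|/\min\{d_G(x),d_G(y)\}$ and $d_G(z)=\dist(z,\partial G)$; since $j_G\le k_G$ always holds, this says that $k_G$ and $j_G$ agree up to the homeomorphism $\varphi$. The target is the same inequality for $G'$: I must produce a homeomorphism $\varphi'$, depending only on $\varphi,\theta,n$, with $k_{G'}(x',y')\le\varphi'(r_{G'}(x',y'))$ for all $x',y'\in G'$. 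Writing $x=f^{-1}(x')$ and $y=f^{-1}(y')$, this amounts to transferring an upper bound on $k_G(x,y)$ into an upper bound on $k_{G'}(x',y')$ while simultaneously comparing the two relative distances. I deliberately avoid reducing to Theorem \ref{thm-1} via ``quasimöbius $\Rightarrow$ roughly Apollonian bilipschitz'', because the Apollonian metric degenerates when $\partial G$ lies on a sphere or hyperplane (e.g.\ half-spaces, which are $\varphi$-uniform), so that route would not cover the full statement.

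First I would observe that a $\theta$-quasimöbius homeomorphism $f\colon G\to G'$ with $n\ge 2$ is $K$-quasiconformal with $K=K(\theta,n)$: away from the preimage of $\infty$ the quasimöbius condition localizes to an $\eta$-quasisymmetry condition, and quasisymmetric homeomorphisms of domains in $\mathbb{R}^n$ are quasiconformal (cf.\ \cite{Vai1}). With quasiconformality in hand, the Gehring--Osgood inequality \cite{GO} yields constants $C=C(K,n)$ and $\alpha=\alpha(K,n)$ with
\[
k_{G'}(x',y')\le C\max\{k_G(x,y),\,k_G(x,y)^{\alpha}\}
\]
for all $x,y\in G$; this is exactly the upper transfer of the quasihyperbolic metric that makes the argument non-circular (a direct attempt to bound $k_{G'}$ from below via $j_{G'}\le k_{G'}$ goes the wrong way). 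Next, using that both $f$ and $f^{-1}$ are quasimöbius, I would prove (or invoke) that the relative distance, equivalently the $j$-metric, is quasi-preserved: there is a homeomorphism $\eta=\eta(\theta,n)$ with $r_G(x,y)\le\eta\big(r_{G'}(x',y')\big)$. The point is that $r_G(x,y)$ is comparable to a cross ratio $|a,x,y,b|$ built from a nearest boundary point $a$ of $x$ and a far boundary point $b$ (or $b=\infty$), and a quasimöbius map sends $\partial G$ onto $\partial G'$ and distorts such cross ratios only through $\theta$.

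Combining the three ingredients closes the loop: from the Gehring--Osgood bound, the $\varphi$-uniformity of $G$, and the relative-distance comparison I obtain
\[
k_{G'}(x',y')\le C\max\{\varphi(\eta(r_{G'}(x',y'))),\,\varphi(\eta(r_{G'}(x',y')))^{\alpha}\},
\]
so setting $\varphi'(t)=C\max\{\varphi(\eta(t)),\varphi(\eta(t))^{\alpha}\}$, a homeomorphism of $[0,\infty)$ fixing $0$ and depending only on $\varphi,\theta,n$, gives the desired $\varphi'$-uniformity of $G'$. I expect the main obstacle to be the relative-distance comparison step, since $f$ is not assumed quasiconformal a priori and one must first extract its boundary behaviour: showing that $f$ extends to a homeomorphism $\overline{G}\to\overline{G'}$ carrying $\partial G$ onto $\partial G'$, and then estimating the relevant cross ratio cleanly when $G$ is unbounded or when the nearest-boundary-point data is awkward. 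The quasiconformality step is conceptually routine but must be set up carefully, normalizing the images of $\infty$ by an auxiliary Möbius map, so that Gehring--Osgood applies on the possibly unbounded domains.
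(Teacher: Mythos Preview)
Your approach is correct and does work, but it differs from the paper mainly in packaging rather than in substance. The paper's proof is one line: apply Lemma~\ref{lem-4-1} (quasim\"obius $\Rightarrow$ roughly Apollonian bilipschitz) and then Theorem~\ref{thm-1}(2). Unpacking that, the paper obtains the $k$-transfer via Lemma~\ref{lem-4} (roughly Apollonian bilipschitz $\Rightarrow$ CQH) and the $j$-transfer via the chain $j_G\le\alpha_G+\log 3\le M\alpha_{G'}+MC+\log 3\le 2Mj_{G'}+MC+\log 3$, which is exactly the inequality~\eqref{eq-thm1-1}. You instead get the $k$-transfer from ``quasim\"obius $\Rightarrow$ quasiconformal $\Rightarrow$ Gehring--Osgood'' (a perfectly legitimate and slightly more direct route), while your relative-distance comparison in step~3, once written out using a nearest boundary point $a$ and a far point $b$, is precisely the cross-ratio estimate behind $j_G\le\alpha_G+\log 3$ and $\alpha_G\le 2j_G$. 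So the two arguments converge at the $j$-comparison; you are essentially rediscovering the Seittenranta/Apollonian inequalities of Lemma~\ref{lem-3}(1) without naming them.

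Your stated reason for avoiding the paper's route---that $\alpha_G$ degenerates to a pseudometric when $\partial G$ lies in a sphere or hyperplane---is not actually an obstruction. Nowhere in Lemma~\ref{lem-4-1}, Lemma~\ref{lem-4}, or the proof of Theorem~\ref{thm-1} is positivity of $\alpha_G$ used; only the two-sided comparison $\tfrac{1}{2}\alpha_G\le j_G\le\alpha_G+\log 3$ and the rough bilipschitz inequality are invoked, and these remain valid as inequalities between nonnegative functions regardless of whether $\alpha_G$ separates points. So the paper's shorter route covers half-spaces and all other $\varphi$-uniform domains without modification. Your argument has the minor advantage of making the dependence on $n$ visible (through Gehring--Osgood) and of bypassing the power-quasim\"obius normalisation in Lemma~\ref{lem-4-1}; the paper's route has the advantage of reusing Theorem~\ref{thm-1} wholesale.
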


The rest of this paper is organized as follows.  In Section \ref{sec-2}, we recall some definitions and preliminary results.
Section \ref{sec-3} is devoted to the proofs of our main results.
\section{Preliminaries}\label{sec-2}
\begin{nonsec}{\bf Notation.\,}We denote by $\mathbb{R}^n$ the Euclidean $n$-space and by $\overline{\mathbb{R}^n}= \mathbb{R}^n\cup \{\infty\}$ the one point compactification
of $\mathbb{R}^n$, $G$ and $G'$ are  proper domains in $\mathbb{R}^n$.\end{nonsec}
\begin{nonsec}{\bf Uniform domains.\,}
In 1979, uniform domains were introduced by Martio and Sarvas \cite{MS}. A domain $G\subsetneq \mathbb{R}^n$ is called {\it uniform} provided there exists a constant $c$
with the property that each pair of points $x$, $y$ $\in G$ can
be joined by a rectifiable curve $\gamma$ in $G$ satisfying
\begin{enumerate}
\item $\ell(\gamma)\leq c\,|x-y|$, and
\item $\min\{\ell(\gamma[x,z]),\ell(\gamma[z,y])\}\leq c\,d_G(z)$ for all $z\in \gamma$,
\end{enumerate}
\noindent where $d_G(z)=\dist(z,\partial G)$, $\ell(\gamma)$ denotes the arc length of $\gamma$,
$\gamma[x,z]$ the part of $\gamma$ between $x$ and $z$. \end{nonsec}There is an important characterization of uniform domains in terms of an
inequality for {\it$j$-metric}
$$j_G(x,y)=\log\Big(1+\frac{|x-y|}{\min\{d_G(x),d_G(y)\}}\Big)$$
and the {\it quasi-hyperbolic metric}
$$k_G(x,y)=\inf \int_\gamma \frac{|dx|}{d_G(x)},$$
where the infimum is taken over all rectifiable curves joining $x$ and $y$ in $G$.

\begin{theorem}\label{lem-0}$($\cite[Theorem 1]{GO}$)$ A domain $G \subset \mathbb{R}^n$  is uniform if and only if there exist constants $c$ and $d$ such that for all $ x,y\in G$
$$k_G(x,y)\leq c j_G(x,y)+d.$$
\end{theorem}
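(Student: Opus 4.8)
The statement is the Gehring--Osgood characterization, so the plan is to prove the two implications separately, using throughout that the lower bound $j_G(x,y)\le k_G(x,y)$ holds in \emph{every} proper subdomain. Indeed, for any rectifiable $\beta$ from $x$ to $y$, parametrizing by arclength measured from the endpoint $y$ and using $d_G(w)\le d_G(y)+\ell(\beta[w,y])$ gives $\int_\beta |dw|/d_G(w)\ge\log\bigl(1+\ell(\beta)/d_G(y)\bigr)\ge\log\bigl(1+|x-y|/d_G(y)\bigr)$, and symmetrically with $d_G(x)$; taking the infimum over $\beta$ yields $k_G\ge j_G$. Hence the whole content of the theorem lies in the reverse estimate $k_G\le c\,j_G+d$ and its geometric interpretation: uniformity must force an \emph{upper} bound on $k_G$, and conversely this upper bound must force the existence of uniform curves.

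For the \textbf{necessity} (uniform $\Rightarrow$ inequality), I fix $x,y\in G$ and a $c$-uniform curve $\gamma$ joining them, and let $z_0$ be its arclength midpoint, splitting $\gamma$ into halves $\gamma_1=\gamma[x,z_0]$ and $\gamma_2=\gamma[z_0,y]$. On $\gamma_1$ I parametrize by arclength $s=\ell(\gamma[x,z])\in[0,\ell/2]$, where $\ell=\ell(\gamma)$; the cone condition gives $s\le c\,d_G(z)$, while $d_G(z)\ge d_G(x)-s$ is sharper for small $s$. Splitting the integral at $s=d_G(x)/2$,
\[
\int_{\gamma_1}\frac{|dz|}{d_G(z)}\le\int_0^{d_G(x)/2}\frac{ds}{d_G(x)/2}+\int_{d_G(x)/2}^{\ell/2}\frac{c\,ds}{s}\le 1+c\log\frac{\ell}{d_G(x)},
\]
the second integral being absent when $\ell<d_G(x)$. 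Invoking the length condition $\ell\le c|x-y|$ and the trivial bound $\log\bigl(|x-y|/d_G(x)\bigr)\le j_G(x,y)$, the right side is at most $c\,j_G(x,y)+(1+c\log c)$. The same estimate holds on $\gamma_2$ with $d_G(y)$ in place of $d_G(x)$, and adding the two halves gives $k_G(x,y)\le\int_\gamma|dz|/d_G(z)\le 2c\,j_G(x,y)+2(1+c\log c)$, which has the required form.

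For the \textbf{sufficiency} (inequality $\Rightarrow$ uniform), the plan is to fix $x,y\in G$, take a quasihyperbolic geodesic $\gamma$ from $x$ to $y$ (these exist by \cite{GO}), and prove that $\gamma$ is itself a uniform curve with constants depending only on $c,d,n$. Two elementary geodesic estimates are available after reparametrizing by quasihyperbolic length and using $|\log(d_G(w)/d_G(z))|\le k_G(w,z)$: for every subarc $\gamma[a,b]$ one has $\ell(\gamma[a,b])\le\min\{d_G(a),d_G(b)\}\,(e^{k_G(a,b)}-1)$ together with the reverse $k_G(a,b)\ge\log\bigl(1+\ell(\gamma[a,b])/d_G(b)\bigr)$. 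The engine of the argument is a geometric growth of depth: the hypothesis $k_G\le c\,j_G+d$ forbids a geodesic from spending a long quasihyperbolic distance in a region where $d_G$ stays comparable (such a stretch would have large $k$-length but bounded $j$-distance between its endpoints, contradicting the inequality), so as one moves along $\gamma$ the value $d_G$ must increase at least geometrically in quasihyperbolic time. Subdividing a subarc into pieces of bounded quasihyperbolic length and summing the Euclidean piece-lengths as a geometric series dominated by its deepest term then yields $\ell(\gamma[x,z])\le c_1 d_G(z)$ whenever $z$ lies on the shorter side, which is precisely the cone condition.

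The \textbf{length (turning) condition} $\ell(\gamma)\le c_1|x-y|$ is the main obstacle, and it is subtler than it looks. Writing $m=\min\{d_G(x),d_G(y)\}$, the naive bound $\ell(\gamma)\le m\,(e^{k_G(x,y)}-1)\le m\,e^{d}(1+|x-y|/m)^{c}$ grows polynomially in $|x-y|/m$ and is far too lossy once $c>1$. The correct route is again the geometric-growth subdivision, but now one must exploit the minimizing property of the geodesic to control how deep it descends: a $k$-geodesic cannot overshoot to a depth much larger than dictated by $|x-y|$, for otherwise the detour could be shortcut, contradicting either minimality or the inequality applied to suitable pairs on $\gamma$. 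Making this quantitative---bounding the maximal depth of $\gamma$ by a multiple of $|x-y|$ and then summing the geometric series---is where the real technical work of Gehring and Osgood lies. Once both the cone and turning conditions are in hand with constants depending only on $c,d,n$, the geodesic $\gamma$ exhibits $G$ as a uniform domain, completing the equivalence.
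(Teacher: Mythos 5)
This statement is not proved in the paper at all: it is quoted verbatim as \cite[Theorem 1]{GO}, so the only benchmark is the Gehring--Osgood original, whose hard half your attempt does not actually reproduce. Your preliminary bound $j_G\le k_G$ and your proof of necessity (uniform $\Rightarrow$ $k_G\le c\,j_G+d$) are correct and complete: the midpoint split of the uniform curve, the estimate $d_G(z)\ge d_G(x)-s$ near the endpoint, the cone condition $s\le c\,d_G(z)$ beyond $s=d_G(x)/2$, and the final bound $k_G(x,y)\le 2c\,j_G(x,y)+2(1+c\log c)$ all check out. This is the standard and essentially the only reasonable argument for that direction.

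The sufficiency direction, however, contains a genuine gap, in two places. First, your ``engine'' is a non sequitur as stated: from the correct observation that the hypothesis forbids a geodesic subarc of large quasihyperbolic length on which $d_G$ stays within a fixed factor, you conclude that $d_G$ ``must increase at least geometrically in quasihyperbolic time.'' It does not follow: the depth could oscillate (rise by a factor of $2$, fall back, rise again), and a pairwise application of $k_G\le c\,j_G+d$ to the endpoints of such oscillations produces no contradiction --- one only gets that the number of oscillations between depths $\delta$ and $D$ is $O(c\log(D/\delta))$, not zero. Converting ``no long flat stretches'' into monotone geometric growth of $d_G$ toward the point $z$ (which is what the geometric-series summation for the cone condition needs) requires an induction that combines the hypothesis with the minimizing property of the geodesic; this is precisely the content of Gehring and Osgood's lemmas, not a routine step. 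Second, for the length condition $\ell(\gamma)\le c_1|x-y|$ you say explicitly that bounding the maximal depth of $\gamma$ by a multiple of $|x-y|$ ``is where the real technical work of Gehring and Osgood lies'' --- that is, you name the key estimate and defer it to the authors being cited rather than proving it. A proof attempt whose central quantitative claims are asserted (one of them incorrectly, as a direct implication) and whose acknowledged core is outsourced is a road map, not a proof; to complete it you would need to formulate and prove the growth lemma along geodesics, e.g.\ in the form: there exist $A=A(c,d)$ and $\theta=\theta(c,d)<1$ such that if $u,v,w$ lie in this order on a quasihyperbolic geodesic with $k_G(u,v)\ge A$ and $k_G(v,w)\ge A$, then $d_G(v)\ge\theta^{-1}\min\{d_G(u),d_G(w)\}$, and then run the summation argument on both the cone and the turning conditions.
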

This form of the definition for uniform domains is due to Gehring and
Osgood \cite{GO} and subsequently, it was shown by Vuorinen \cite[$2.50(2)$]{Vu2} that the additive constant can be chosen to be zero.
 This observation leads to the definition of $\varphi$-uniform domains introduced in \cite{Vu2}. Let $\varphi:[0,\infty)\to [0,\infty)$ be a homeomorphism. A domain $G\subsetneq \mathbb{R}^n$ is called $\varphi$-{\it uniform} if for all $x$, $y$ in $G$
$$k_G(x,y)\leq \varphi(r_G(x,y))\;\;\;\;\;\mbox{where}\;\;\;\;\;r_G(x,y)=\frac{|x-y|}{\min\{d_G(x),d_G(y)\}}.$$

In order to give a simple criterion for $\varphi$-uniform domains,
consider domains  $G$ satisfying the following property \cite[Examples~2.50~(1)]{Vu2}:
there exists a constant $C\ge 1$ such that each pair of points
$x,y\in G$ can be joined by a rectifiable path $\gamma\in G$ with
$\ell(\gamma)\le C\,|x-y|$ and $\min\{d_G(x),d_G(y)\}\le
C\,d(\gamma,\partial G)$. Then $G$ is $\varphi$-uniform with
$\varphi(t)=C^2t$. In particular, every convex domain is
$\varphi$-uniform with $\varphi(t)=t$. However, in general, convex
domains need not be uniform.

\begin{nonsec}{\bf Natural domains.} Suppose that $\emptyset\not=A\subset G\subsetneq \mathbb{R}^n$. We write $$r_G(A)=\sup\{r_G(x,y): x\in A, y\in A\}.$$\end{nonsec}
Clearly,
$$   \frac{d(A)}{2d(A, \partial G)}  \le r_G(A) \le   \frac{d(A)}{d(A, \partial G)} \,, $$ where $d(A)$ denotes the diameter of set $A$ and $d(A, \partial G)$ is the distance from  set $A$ to the boundary $\partial G$.

 Let $\psi:[0,\infty)\to [0,\infty)$ be an increasing function.
 A domain $G \subsetneq \mathbb{R}^n$ is called $\psi$-{\it natural} if
$$k_G(A)\leq \psi(r_G(A))$$
for every nonempty connected set $A\subset G$ with $r_G(A)<\infty$, where $k_G(A)$ denotes the quasihyperbolic diameter of  set $A$.

We note that a $\varphi$-uniform domain is $\varphi$-natural, and every convex domain is $\psi$-natural with $\psi(t)=t$ (see, \cite[Theorems 2.8 and 2.9]{Vai6'}).
In fact, the next result from \cite{V1} shows that the class of natural domains is fairly large. Note that the growth of the function $\psi_n(t)$ in Lemma
\ref{lem-2} is $\approx t^n\,.$

\begin{lemma}\label{lem-2}$($\cite[Corollary 2.18]{V1}$)$
Every proper domain in $\mathbb{R}^n$ is $\psi_n$-natural with $\psi_n$ depending only on $n$.
\end{lemma}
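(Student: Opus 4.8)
The plan is to prove the sharper pointwise statement that $k_G(x,y)\le \psi_n(r_G(A))$ for all $x,y\in A$, from which the bound on the quasihyperbolic diameter $k_G(A)$ follows by taking the supremum. Write $t=r_G(A)$ and $s=d(A,\partial G)=\inf_{z\in A}d_G(z)$. If $A$ is a single point there is nothing to prove, so assume $d(A)>0$; then the left-hand inequality $r_G(A)\ge d(A)/(2d(A,\partial G))$ recorded just after the definition of natural domains gives $d(A)\le 2ts$, so in particular $s>0$ and $A$ is bounded. The essential geometric fact is that every point of $A$ is far from the boundary, $d_G(z)\ge s$ for $z\in A$, while $A$ lies in a ball of radius $2ts$. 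The proof is then a covering-and-chaining argument whose length is governed by the volume growth of $\mathbb{R}^n$, and this is exactly what produces the stated growth $\psi_n(t)\approx t^n$.

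First I would build a net. Fix $x_0\in A$ and choose a maximal $(s/4)$-separated set $\{z_1,\dots,z_N\}\subset A$. The balls $B(z_i,s/8)$ are pairwise disjoint and, since each $z_i\in A\subset B(x_0,d(A))\subset B(x_0,2ts)$, they all lie in $B(x_0,2ts+s/8)$; comparing Lebesgue measures yields $N\le (16t+1)^n\le 16^n(t+1)^n$, a bound depending only on $n$ and $t$. By maximality the balls $B(z_i,s/4)$ cover $A$. Defining a graph on the indices with an edge between $i$ and $j$ whenever $B(z_i,s/4)\cap B(z_j,s/4)\neq\emptyset$ (equivalently $|z_i-z_j|<s/2$), I would invoke the standard fact that the nerve of a finite cover of a connected set by open sets, each of which meets the set, is connected; since each $B(z_i,s/4)$ contains $z_i\in A$, this graph is connected.

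The final step is the chaining estimate. For an edge $ij$ one has $|z_i-z_j|<s/2\le \tfrac12 d_G(z_i)$, so the segment $[z_i,z_j]$ lies in $G$ and $d_G(w)\ge d_G(z_i)-|w-z_i|\ge s/2$ along it; hence $k_G(z_i,z_j)\le \int_{[z_i,z_j]}|dw|/d_G(w)< 1$. The same estimate gives $k_G(x,z_{i_0})<1$ and $k_G(y,z_{i_1})<1$ whenever $x\in B(z_{i_0},s/4)$ and $y\in B(z_{i_1},s/4)$. Since the nerve is connected and has at most $N$ vertices, $z_{i_0}$ and $z_{i_1}$ are joined by a chain of at most $N$ edges, so by the triangle inequality $k_G(x,y)< N+2\le 16^n(t+1)^n+2=:\psi_n(t)$, which depends only on $n$ and has growth $\approx t^n$. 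Taking the supremum over $x,y\in A$ finishes the argument.

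The one delicate point is that $G$ is completely arbitrary, so one cannot rely on any regularity of $\partial G$, nor can segments between far-apart points of $A$ be expected to remain inside $G$. The mechanism replacing this is precisely the uniform lower bound $d_G\ge s$ on $A$ together with the volume-packing count of the net; the only care needed is to choose the separation scale ($s/4$) small enough that every edge-segment of the chain stays at distance $\ge s/2$ from $\partial G$, which is what makes each edge cost less than $1$ in the quasihyperbolic metric.
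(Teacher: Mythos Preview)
The paper does not prove this lemma; it is quoted as \cite[Corollary 2.18]{V1} and used as a black box, so there is no in-paper argument to compare your proposal against. That said, your packing-and-chaining proof is correct and self-contained: the key steps---the diameter bound $d(A)\le 2ts$ from the displayed two-sided inequality for $r_G(A)$, the volume count $N\le(16t+1)^n$ for a maximal $(s/4)$-separated net in $A$, the connectedness of the nerve of the cover $\{B(z_i,s/4)\cap A\}$ (hence of your coarser graph), and the edge estimate $k_G(z_i,z_j)<1$ coming from $d_G\ge s$ on $A$---are all sound. The resulting $\psi_n(t)=16^n(t+1)^n+2$ has exactly the growth $\approx t^n$ the paper records after the statement of the lemma.

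Two minor remarks. First, existence of a \emph{maximal} $(s/4)$-separated set follows immediately from your volume bound (any such set has cardinality at most $(16t+1)^n$), so there is no appeal to Zorn needed; you might say this explicitly. Second, a connected graph on $N$ vertices has graph-diameter at most $N-1$, so your chain has at most $N-1$ edges and one gets $k_G(x,y)<N+1$; your $N+2$ is of course harmless. As for the original source, Vuorinen's 1981 paper works in the setting of capacity densities of boundary sets, and the corollary there is phrased somewhat differently; your argument is more direct for the present formulation and fits well with how the lemma is actually used later (notably in the proof of Lemma~\ref{lem-4}, where only a dimension-dependent bound on $k_{G'}(f(A))$ in terms of $r_{G'}(f(A))$ is needed).
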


 It should be noted that Lemma \ref{lem-2} is only valid in the finite dimensional case.
 In an infinite dimensional Hilbert space, the broken tube construction  in \cite[2.3]{Vai2004} provides an example of a domain, which is not natural.

\begin{nonsec}{\bf Gromov hyperbolic domains.\,} A geodesic metric space $X$ is called {\it $\delta$-hyperbolic}, $\delta\geq0$, if for all triples of geodesics $[x,y]$, $[y,z]$, $[z,x]$ in $X$ every point in $[x,y]$ is within distance $\delta$ from $[y,z]\cup[z,x]$. The property is often expressed by saying that geodesic triangles in $X$ are $\delta$-thin. In general, we say that a space is {\it Gromov hyperbolic} if it is $\delta$-hyperbolic for some $\delta$.\end{nonsec}

 We shall use the term {\it Gromov hyperbolic domain ($\delta$-hyperbolic)} for those proper domains in $\mathbb{R}^n$ that are Gromov hyperbolic in the quasihyperbolic metric.

\begin{example}The real line is $0$-hyperbolic. A classical example of a hyperbolic space is
the Poincar\'e half space $x_n > 0$ in $\mathbb{R}^n$ with the hyperbolic metric defined by the element of
length $\frac{|dx|}{x_n}$. This space is $\delta$-hyperbolic with $\delta = \log 3 $ \cite{CDP90}. More generally, uniform
domains in $\mathbb{R}^n$ with the quasihyperbolic metric are hyperbolic \cite[Theorem 1.11]{BHK}.

Some examples of nonhyperbolic domains are: (1) $G=\mathbb{R}^2\setminus \{ne_1:n\in \mathbf{Z}\}$; (2) $G=\{x\in \mathbb{R}^3: 0<x_3<1 \}$ \cite[2.11]{Vai05}.

\end{example}

\begin{nonsec}{\bf Quasim\"obius mapping.\,}
Let $X$ and $Y$ be metric spaces.
A quadruple in a space $X$ is  an ordered sequence $Q=(a,b,c,d)$ of four
distinct points in $X$. The cross ratio of $Q$ is defined to be the
number
$$\tau(Q)=|a,b,c,d|=\frac{|a-c|}{|a-d|}\cdot\frac{|b-d|}{|b-c|}.$$ Observe that the definition is extended in
the well known manner to the case where one of the points is
$\infty$. For example,
$$|a,b,c,\infty|= \frac{|a-c|}{|b-c|}.$$ If $X_0 \subset \dot{X}=X\cup \{\infty\}$ and if $f: X_0\to \dot{Y}=Y\cup \{\infty\}$
is an injective map, the image of a quadruple $Q$ in $X_0$ is the
quadruple $fQ=(fa,fb,fc,fd)$. \end{nonsec}

\begin{definition} \label{def2'} Let  $\theta: [0, \infty)\to [0, \infty)$ be a
homeomorphism.   An embedding $f: X_0\to
\dot{Y}$ is said to be {\it $\theta$-quasim\"obius},
or briefly $\theta$-$QM$, if the inequality $\tau(f(Q))\leq
\theta(\tau(Q))$ holds for each quadruple in $X_0$. In particular, if $\theta(t)=C\max\{t^{\lambda},t^{1/\lambda}\}$, then we say that $f$ is power quasim\"obius.\end{definition}

\begin{nonsec}{\bf Remark.\,}\label{rem1}$($\cite{Vai2}$)$ We remark that the inverse map $f^{-1}$ of a $\theta$-QM is $\theta'$-QM with  $\theta'(t)=\theta^{-1}(t^{-1})^{-1}$ for $t>0$. If $f:A\to \dot{Y}$ is $\theta_1$-QM and $g:f(A)\to \dot{Z}$ is $\theta_2$-QM, then the composition $g\circ f$ is $\theta$-QM with $\theta(t)=\theta_2(\theta_1(t))$.
If $f$ is $\theta$-QM with $\theta(t)=t$, then we say that $f$ is a M\"obius map.  In particular, the inversion $u$ defined by $u(x)=\frac{x}{|x|^2}$ is M\"obius in an inner product space. \end{nonsec}

\begin{nonsec}{\bf Roughly  bilipschitz mappings and  quasiconformal mappings.\, }

A homeomorphism  $f: (G,m_G)\to (G',m_{G'})$ is said to be an {\it $M$-roughly $C$-bilipschitz in the $m$ metric}, if $M\geq 1$, $C\geq 0$, and $$\frac{m_G(x,y)-C}{M}\leq m_{G'}(f(x),f(y))\leq M m_G(x,y)+C$$ for all $x,y\in G$.
 A homeomorphism  $f: G\to G'$ is said to be an {\it $(M,C)$-roughly Apollonian bilipschitz}, if it is $M$-roughly $C$-bilipschitz in the Apollonian metric. This means that $f$ is a homeomorphism such that  $$\frac{\alpha_G(x,y)-C}{M}\leq \alpha_{G'}(f(x),f(y))\leq M\alpha_G(x,y)+C$$ for all $x,y\in G$.
 Similarly, we say that a homeomorphism $f$ is {\it $C$-coarsely $M$-quasihyperbolic}, abbreviated {\it $(M,C)$-CQH} if it is $M$-roughly $C$-bilipschitz in the quasihyperbolic metric. This means that $f$ is a homeomorphism such that $$\frac{k_G(x,y)-C}{M}\leq k_{G'}(f(x),f(y))\leq Mk_G(x,y)+C$$ for all $x,y\in G$.\end{nonsec}

 The basic theory of quasiconformal mappings in $\mathbb{R}^n, n\geq 2$ is given in V\"ais\"al\"a's book \cite{Vaibook}. There are plenty of mutually equivalent definitions for quasiconformality in $\mathbb{R}^n$. In this paper we adopt the following simplified version of the metric definition. Let $n\geq 2$, let $G$ and $G'$ be domains in $\mathbb{R}^n$, and let 
 $f:G\to G'$ be a homeomorphism. For $x\in G$,  The {\it linear dilatation} of $f$ at $x\in G$ is defined by $$H_f(x):=\limsup_{r\rightarrow 0}\frac{\sup\{|f(x)-f(y)|:|x-y|=r\}}{\inf\{|f(x)-f(z)|:|x-z|=r\}}.$$
For $1\leq K< \infty$, we say that $f:G\to G'$ is $K$-{\it quasiconformal} if $H_{f}(x)\leq K $ for all $ x\in G$, and that $f$ is {\it quasiconformal} if it is $K$-quasiconformal for some $K$.

For a $K$-quasiconformal mapping we have the following property.

\begin{lemma}\label{lem2}$($\cite[Theorem $3$]{GO}$)$
For $n\geq 2$, $K\geq 1$, there exist constants $c$ and $\mu$ depending only on $n$
and $K$ with the following property. If $G, G'\subset \mathbb{R}^n$ and
$f: G\to G'$ is a $K$-quasiconformal mapping, then for all $x,y\in G$,
$$k_{G'}(f(x),f(y))\leq c \max\{k_G(x,y), (k_G(x,y))^{\mu}\}.$$
\end{lemma}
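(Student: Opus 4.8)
The plan is to reduce the global distortion estimate to a purely local one and then to recover the global bound by chaining along a quasihyperbolic near-geodesic. The exponent will emerge as the H\"older exponent $\mu=K^{1/(1-n)}\in(0,1]$ of $K$-quasiconformal maps, and the two terms in $\max\{k_G(x,y),k_G(x,y)^{\mu}\}$ will correspond respectively to the large-distance (chained) regime and the small-distance (single-step) regime.

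First I would establish the local estimate, which is the crux. Fix $x\in G$ and note that $B=B(x,d_G(x))\subset G$, so $f|_B$ is $K$-quasiconformal into $G'$. Using the standard quasiconformal distortion theory on balls --- namely that the restriction of $f$ to $B(x,\tfrac12 d_G(x))$ is $\eta$-quasisymmetric with $\eta=\eta_{n,K}$, together with the H\"older continuity of $K$-quasiconformal maps --- I would prove that there are constants $a\in(0,\tfrac12]$ and $b\ge 1$, depending only on $n$ and $K$, such that whenever $|x-y|\le a\,d_G(x)$ one has both the Harnack-type comparison $b^{-1}\le d_{G'}(f(x))/d_{G'}(f(y))\le b$ and the displacement bound
$$\frac{|f(x)-f(y)|}{d_{G'}(f(x))}\le b\Big(\frac{|x-y|}{d_G(x)}\Big)^{\mu}.$$
Here I use that $f(B)\subset G'$, so $d_{G'}(f(x))\ge \dist(f(x),\partial f(B))$, and that the modulus/oscillation estimates control $|f(x)-f(y)|$ relative to $\dist(f(x),\partial f(B))$ with the H\"older power $\mu$. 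This is the step where all of the quasiconformal analysis is concentrated; everything afterwards is metric bookkeeping, and it is the main obstacle.

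Next I would convert this into a quasihyperbolic statement for nearby points. For $|x-y|\le a\,d_G(x)$ the straight segment gives $k_G(x,y)\le 2|x-y|/d_G(x)$, while the comparison $j_{G'}\le k_{G'}\le c\,j_{G'}$, valid for image points with comparable boundary distances (guaranteed by the Harnack bound above), together with the displacement bound yields
$$k_{G'}(f(x),f(y))\le c_1\,\Big(\frac{|x-y|}{d_G(x)}\Big)^{\mu}\le c_2\,k_G(x,y)^{\mu},$$
with $c_1,c_2$ depending only on $n$ and $K$. In particular there is a fixed step size $s_0=s_0(n,K)\in(0,1]$ so that $k_G(x,y)\le s_0$ forces $|x-y|\le a\,d_G(x)$, and hence $k_{G'}(f(x),f(y))\le c_2\,k_G(x,y)^{\mu}$.

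Finally I would chain. Given $x,y\in G$, pick a rectifiable curve $\gamma$ from $x$ to $y$ with quasihyperbolic length at most $k_G(x,y)+\varepsilon$, and select successive points $x=x_0,x_1,\dots,x_m=y$ on $\gamma$ with $k_G(x_{i-1},x_i)\le s_0$ and, when $k_G(x,y)\ge 1$, with $m\le 2\,k_G(x,y)/s_0$. Applying the local estimate to each consecutive pair and summing by the triangle inequality gives
$$k_{G'}(f(x),f(y))\le \sum_{i=1}^m k_{G'}(f(x_{i-1}),f(x_i))\le m\,c_2\,s_0^{\mu}\le c_3\,k_G(x,y).$$
Letting $\varepsilon\to0$ and distinguishing the two regimes completes the proof: when $k_G(x,y)\ge 1$ this is the linear bound $c\,k_G(x,y)$, and when $k_G(x,y)<1$ a single application of the local step gives $c\,k_G(x,y)^{\mu}$ directly. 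Since $\mu\le1$, for $k_G(x,y)<1$ one has $k_G(x,y)^{\mu}\ge k_G(x,y)$ while for $k_G(x,y)\ge1$ one has $k_G(x,y)\ge k_G(x,y)^{\mu}$, so the two estimates combine into $c\max\{k_G(x,y),k_G(x,y)^{\mu}\}$, as required.
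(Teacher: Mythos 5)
The paper offers no proof of this lemma at all: it is quoted verbatim from Gehring--Osgood \cite[Theorem 3]{GO}, so there is no in-paper argument to compare against. Your proposal is, in outline, exactly the classical Gehring--Osgood argument: a local H\"older-type distortion estimate with the sharp exponent $\mu=K^{1/(1-n)}$, obtained from modulus estimates on the ball $B(x,d_G(x))$ (and using $\dist(f(x),\partial f(B))\le d_{G'}(f(x))$), converted into the quasihyperbolic bound $k_{G'}(f(x),f(y))\le c_2\,k_G(x,y)^{\mu}$ for quasihyperbolically close points, followed by chaining along a nearly length-minimizing curve; this is sound and gives the correct constants depending only on $n$ and $K$. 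One small patch: your closing case distinction lets a \emph{single} application of the local step handle all of $k_G(x,y)<1$, but that step requires $k_G(x,y)\le s_0$, and $s_0$ may well be smaller than $1$; for the middle regime $s_0\le k_G(x,y)<1$ you should instead run your chaining argument (with $m\le 3k_G(x,y)/s_0$ steps), which yields $k_{G'}(f(x),f(y))\le c_5\,k_G(x,y)\le c_5\,k_G(x,y)^{\mu}$ since $\mu\le 1$, after which the stated $\max$-bound follows as you say.
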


The next result deals with the case when the mapping is defined in
$ \mathbb{R}^n\,.$

\begin{lemma}\label{lem2'}$($\cite[Lemma $2.3$]{HKSV}$)$
For $n\geq 2$, $K\geq 1$, there exist constants $c$ and $\mu$ depending only on $n$ and $K$  with the following property. If $f: \mathbb{R}^n\to \mathbb{R}^n$ is a $K$-quasiconformal mapping,
$G, G'\subset \mathbb{R}^n$ are domains, and  $fG= G'$, then for all $x,y\in G$,
$$j_{G'}(f(x),f(y))\leq c \max\{j_G(x,y), (j_G(x,y))^{\mu}\}.$$
\end{lemma}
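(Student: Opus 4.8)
The essential point is that $f$ is assumed defined on all of $\mathbb{R}^n$, not merely on $G$; this global hypothesis is exactly what distinguishes Lemma~\ref{lem2'} from Lemma~\ref{lem2} and what makes the (M\"obius-sensitive) $j$-metric controllable rather than only the quasihyperbolic metric. The plan is to reduce the statement to a quasisymmetry estimate. First I would use the standard fact from quasiconformal theory (see \cite{Vaibook}) that a $K$-quasiconformal homeomorphism $f:\mathbb{R}^n\to\mathbb{R}^n$ extends to a homeomorphism of $\overline{\mathbb{R}^n}$ fixing $\infty$ and is therefore power quasisymmetric: $f$ is $\eta$-$QS$ with $\eta(t)=C\max\{t^{\alpha},t^{1/\alpha}\}$, where $C\ge1$ and $\alpha\ge1$ depend only on $n$ and $K$. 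Because $f(\infty)=\infty$ and $fG=G'$, the map carries the finite Euclidean boundary $\partial G$ onto $\partial G'$, and the points $x,y$, their nearest boundary points, and all their images stay finite, so the quasisymmetry inequality may be applied directly.

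Next I would establish a two-sided comparison of boundary distances. For $x\in G$ choose $a\in\partial G$ with $|x-a|=d_G(x)$; then $d_{G'}(f(x))\le|f(x)-f(a)|$ since $f(a)\in\partial G'$, while taking $b\in\partial G'$ with $d_{G'}(f(x))=|f(x)-b|$ and $a'=f^{-1}(b)\in\partial G$ (so $|x-a'|\ge|x-a|$) and applying the $QS$ inequality centred at $x$ gives $|f(x)-f(a)|\le\eta(1)\,d_{G'}(f(x))$. With this in hand, for arbitrary $x,y\in G$ I assume without loss of generality $d_G(x)\le d_G(y)$, so $r:=r_G(x,y)=|x-y|/d_G(x)$, and let $a,c$ be nearest boundary points of $x,y$. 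The $QS$ inequality centred at $x$ gives $|f(x)-f(y)|\le\eta(r)\,|f(x)-f(a)|\le\eta(1)\eta(r)\,d_{G'}(f(x))$; centred at $y$, and using $|x-y|/d_G(y)\le r$, it gives $|f(x)-f(y)|\le\eta(1)\eta(r)\,d_{G'}(f(y))$. As both hold, taking the smaller denominator yields
\[
r_{G'}(f(x),f(y))\le\eta(1)\,\eta(r)=C^2\max\{r^{\alpha},r^{1/\alpha}\}.
\]

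It then remains to pass from $r$ to $j=\log(1+r)$. Writing $s=j_G(x,y)$ and $s'=j_{G'}(f(x),f(y))$, an elementary estimate using $\log(1+t)\le t$ and $e^s-1\le s/\log2$ on $[0,\log2]$ treats two regimes: for $r\le1$ one obtains $s'\le C^2(\log2)^{-1/\alpha}s^{1/\alpha}$, whereas for $r\ge1$ the logarithm converts the power $\alpha$ into a multiplicative constant, giving $s'\le(\alpha+\log(1+C^2)/\log2)\,s$. Combining the two regimes produces $j_{G'}(f(x),f(y))\le c\max\{j_G(x,y),j_G(x,y)^{\mu}\}$ with $\mu=1/\alpha$ and $c$ depending only on $C$ and $\alpha$, hence only on $n$ and $K$.

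The hard part will be the first two steps rather than the final computation: one must correctly invoke global quasiconformality to secure power quasisymmetry with data depending only on $n$ and $K$, and then prove the comparison $d_{G'}(f(x))\asymp|f(x)-f(a)|$ that drives everything. Once quasisymmetry and this boundary-distance comparison are available, the main estimate and the conversion to the $j$-metric are routine.
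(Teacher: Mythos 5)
The paper contains no proof of this lemma at all: it is imported verbatim from \cite[Lemma 2.3]{HKSV}, so there is nothing internal to compare your argument against, and the only question is whether your proof stands on its own. It does. A $K$-quasiconformal homeomorphism of $\mathbb{R}^n$ onto $\mathbb{R}^n$ is indeed $\eta$-quasisymmetric with $\eta(t)=C\max\{t^{\alpha},t^{1/\alpha}\}$ and $C,\alpha$ depending only on $n$ and $K$ (quasiconformality plus $f(\infty)=\infty$ gives quasisymmetry, and the power form is \cite[Corollary 3.12]{TV}, the same result this paper itself invokes in the proof of Lemma \ref{lem-4-1}; that is the sharper citation to give rather than \cite{Vaibook} alone). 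Your boundary-distance comparison $d_{G'}(f(x))\le |f(x)-f(a)|\le \eta(1)\,d_{G'}(f(x))$ is correct and is exactly where the global hypothesis enters: a homeomorphism of $\mathbb{R}^n$ with $fG=G'$ carries $\partial G$ onto $\partial G'$, so both the image of the nearest boundary point and the preimage of a nearest point of $\partial G'$ are available. The two centred quasisymmetry estimates then legitimately give $r_{G'}(f(x),f(y))\le \eta(1)\eta(r_G(x,y))$, and the two-regime conversion to the $j$-metric (using $e^s-1\le s/\log 2$ for $s\le\log 2$, and absorbing $\log(1+C^2)$ into a multiple of $s$ when $s\ge \log 2$) is arithmetically sound, yielding $\mu=1/\alpha$ and $c=c(n,K)$. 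The shape of your conclusion ($\mu\le 1$, so power behaviour for small $j_G$ and linear growth for large $j_G$) is consistent with the Gehring--Osgood-type bound of Lemma \ref{lem2}, and your argument is essentially the standard derivation one expects behind the cited source.
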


\begin{nonsec}{\bf Remark.\,}Let $G_1$ and $G_2$ be proper domains of $\mathbb{R}^n$. We know from Lemma \ref{lem-2} that $G_i$ $(i=1,2)$  is $\psi_i$-natural with $\psi_i$ depending only on $n$. Suppose that  $f:G_1\to G_2$ is a $K$-quasiconformal mapping of $\mathbb{R}^n$ which maps $G_1$ onto $G_2$, then  we see from Lemmas \ref{lem2} and \ref{lem2'} that $G_2$ is  $\psi_2$-natural with $\psi_2=\psi_2(\psi_1,n,K)$.
\end{nonsec}

Moreover, we see from Lemma \ref{lem2} and \cite[Theorem $4.14$]{Vai0} that a quasiconformal mapping is CQH, which we state as follows.

\begin{lemma}\label{lem1}$($\cite[Theorem $3$]{GO} {\rm and} \cite[Theorem $4.14$]{Vai0}$)$
For $n\geq 2$, $K\geq 1$, there exist constants $M\geq 1$, $C>0$ such that if $G, G'\subset \mathbb{R}^n$ and  $f: G\to G'$ is $K$-quasiconformal, then  $f$ is $(M,C)$-CQH.
\end{lemma}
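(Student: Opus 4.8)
The plan is to upgrade the power-type estimate of Lemma \ref{lem2} to the affine two-sided estimate demanded by the definition of $(M,C)$-CQH, doing this symmetrically for $f$ and $f^{-1}$. The first thing I would record is that since $f:G\to G'$ is $K$-quasiconformal, so is the inverse homeomorphism $f^{-1}:G'\to G$ with the same $K$; hence Lemma \ref{lem2} applies in both directions, giving constants $c,\mu$ (depending only on $n$ and $K$) with $k_{G'}(f(x),f(y))\le c\max\{k_G(x,y),k_G(x,y)^\mu\}$ and the symmetric inequality for $f^{-1}$. The key consequence I want to isolate is the \emph{local} one: whenever $k_G(x,y)\le 1$ we have $\max\{k_G(x,y),k_G(x,y)^\mu\}\le 1$, so $k_{G'}(f(x),f(y))\le c$, and likewise $k_G(u,v)\le c$ whenever $k_{G'}(u,v)\le 1$. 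In other words, $f$ sends pairs at quasihyperbolic distance at most $1$ to pairs at quasihyperbolic distance at most $c$.

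Next I would exploit the fact that the quasihyperbolic metric of a proper domain in $\mathbb{R}^n$ is geodesic, so $x$ and $y$ can be joined by a quasihyperbolic geodesic $\gamma$; if one prefers to avoid invoking the existence of geodesics, any path of $k_G$-length at most $k_G(x,y)+1$ serves equally well. Writing $N=\lceil k_G(x,y)\rceil$, I would subdivide $\gamma$ by points $x=x_0,x_1,\dots,x_N=y$ with $k_G(x_{i-1},x_i)\le 1$ for every $i$. Applying the local estimate to each consecutive pair and summing via the triangle inequality yields
\[
k_{G'}(f(x),f(y))\le\sum_{i=1}^{N}k_{G'}(f(x_{i-1}),f(x_i))\le cN\le c\,k_G(x,y)+c,
\]
which is exactly the upper bound $k_{G'}(f(x),f(y))\le M k_G(x,y)+C$ with $M=C=c$.

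Finally, I would run the identical chaining argument for the quasiconformal homeomorphism $f^{-1}$, obtaining $k_G(x,y)\le M k_{G'}(f(x),f(y))+C$ for all $x,y\in G$, which rearranges to $\frac{k_G(x,y)-C}{M}\le k_{G'}(f(x),f(y))$. Combined with the previous paragraph this gives the $(M,C)$-CQH property, with $M\ge 1$ and $C>0$ depending only on $n$ and $K$; this chaining-plus-symmetry mechanism is precisely what \cite[Theorem $4.14$]{Vai0} encodes. The point to watch — and really the heart of the matter — is the passage from the nonlinear $\max\{t,t^\mu\}$ growth of Lemma \ref{lem2} to a genuinely affine bound: this works only because the $\mu$-power is harmless on quasihyperbolic scale at most $1$, where the estimate degenerates to the constant $c$, and the affine growth is then recovered by counting the $N\approx k_G(x,y)$ unit-length pieces of a geodesic rather than by estimating $k_G(x,y)$ in a single step.
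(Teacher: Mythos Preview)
Your proposal is correct and matches the paper's approach: the paper does not spell out a proof but simply remarks that the result follows from Lemma~\ref{lem2} together with \cite[Theorem~4.14]{Vai0}, and your chaining-plus-symmetry argument is exactly the content of that reference applied to the Gehring--Osgood estimate. The only minor quibble is the claim that $f^{-1}$ is $K$-quasiconformal ``with the same $K$''; under the metric (linear dilatation) definition adopted in the paper this is not literal, but $f^{-1}$ is $K'$-quasiconformal with $K'=K'(n,K)$ via the equivalence of definitions, which is all you need.
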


\begin{nonsec}{\bf Remark.\,}\label{rem2}
Let $G$ be a proper domain of $\mathbb{R}^n$. We consider the Apollonian metric $\alpha_G$, Seittenranta's metric $\delta_G$
which is defined as $($\cite{S}$)$
$$\delta_G(x,y)=\log(1+\sup_{a,b\in\partial G}|a,x,b,y|),$$
the metric $h_{G,c}$ $(c\geq 2)$ which is defined as $($\cite{DHV}$)$
$$h_{G,c}(x,y)=\log(1+c\frac{|x-y|}{\sqrt{d_G(x)d_G(y)}})$$
and $j_G$ metric. We see from \cite[Theorems 3.4 and 3.11]{S} and  \cite[Lemma 4.4]{DHV} that the inequalities \begin{equation}\label{rem-eq-1}j_G\leq \delta_G\leq 2j_G,\end{equation} \begin{equation}\label{rem-eq-2}\alpha_G\leq \delta_G\leq \log(e^{\alpha_G}+2)\leq \alpha_G+\log 3\end{equation} and $$\frac{c}{2(1+c)}j_G\leq h_{G,c}\leq cj_G$$ hold for every proper domain $G$ of $\mathbb{R}^n$. Hence, the identity  map $id: (G, m_1)\to (G, m_2)$ is roughly bilipschitz, where $m_1,m_2\in \{\alpha_G,j_G, \delta_G,h_{G,c}\}$. \end{nonsec}

\begin{nonsec}{\bf Quasi-isotropic.\, }
 The concept of quasi-isotropy which is a kind of local comparison property was introduced by H\"ast\"o in \cite{H1}, and was the focus of \cite{H2}. Let $G \subsetneq \mathbb{R}^n$. We recall  that a metric space $(G,d)$ is {\it $L$-quasi-isotropic} ($L\geq 1$) \cite{H1} if $$\limsup_{r\to 0}\frac{\sup\{d(x,z):|x-z|=r\}}{\inf\{d(x,y):|x-y|=r\}}\leq L $$ for every $x\in G$, where $|x-z|$ means the Euclidean distance of $x$ and $z$. In this paper, we say a domain $G$ is  $L$-quasi-isotropic if $(G,\alpha_G)$ is $L$-quasi-isotropic.
\end{nonsec}

%

\section{The proofs of main results}\label{sec-3}
\begin{nonsec}{\bf Basic lemmas.\,} In this section, we shall give the proofs of our main results. We first introduce some basic inequalities which are important to our proofs.\end{nonsec}

\begin{lemma}\label{lem-3}
Let $G \subsetneq \mathbb{R}^n$ be a domain.
\begin{enumerate}
\item $($\cite[Theorems 3.4 and 3.11]{S}$)$ For all $x,y\in G$, $$j_G(x,y)\leq \alpha_G(x,y)+\log 3;\; \frac{1}{ 2}\alpha_G(x,y)\leq j_G(x,y)\leq k_G(x,y);$$

\item $($\cite[Theorem 3.9]{Vai1}$)$ If $|x-y|\leq d_G(x)/2$ or $k_G(x,y)\leq 1$ with $x,y\in G$, then
$$\frac{|x-y|}{2d_G(x)}\leq k_G(x,y)\leq \frac{2|x-y|}{d_G(x)}.$$
\item If $G$ is  a $c$-uniform domain, then  we have $$k_G(x,y)\leq c_1 j_G(x,y)\leq c_1(\alpha_G(x,y)+\log 3),$$ where $c_1=c_1(c)$. Moreover, for the uniform domain $G=\mathbb{R}^n\setminus \{0\}$ we note that there does not exist any constant $c_2\geq 0$ such that $k_G(x,y)\leq c_2\alpha_G(x,y)$ holds for all $x,y\in G$.
\end{enumerate}
\end{lemma}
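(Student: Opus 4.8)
The plan is to prove the three parts of Lemma~\ref{lem-3} by assembling the metric comparisons recorded earlier in the excerpt, since each inequality is either a restatement of a cited result or a direct consequence of those already in hand.

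For part~(1), the inequality $j_G\le\alpha_G+\log 3$ and the two-sided relation $\tfrac12\alpha_G\le j_G\le k_G$ come directly from Seittenranta's estimates. Concretely, I would derive $j_G\le\alpha_G+\log 3$ from the chain \eqref{rem-eq-1}--\eqref{rem-eq-2}: combining $j_G\le\delta_G$ with $\delta_G\le\alpha_G+\log 3$ gives the first bound immediately. For the lower bound $\tfrac12\alpha_G\le j_G$, I would use $\alpha_G\le\delta_G\le 2j_G$ from the same chain, which yields $\alpha_G\le 2j_G$ and hence $\tfrac12\alpha_G\le j_G$. Finally $j_G\le k_G$ is the standard inequality that the $j$-metric is dominated by the quasihyperbolic metric, following at once from the definitions since $|x-y|/\min\{d_G(x),d_G(y)\}$ is controlled by integrating $1/d_G$ along any curve. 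So part~(1) is essentially bookkeeping with the relations in Remark~\ref{rem2}.

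For part~(2), the two-sided estimate $\tfrac{|x-y|}{2d_G(x)}\le k_G(x,y)\le\tfrac{2|x-y|}{d_G(x)}$ under the hypothesis $|x-y|\le d_G(x)/2$ or $k_G(x,y)\le 1$ is exactly the cited V\"ais\"al\"a estimate \cite[Theorem~3.9]{Vai1}, so here I would simply invoke that result; the content is that near a point the quasihyperbolic metric is comparable to the normalized Euclidean distance.

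For part~(3), I would argue as follows. If $G$ is $c$-uniform, then by the Gehring--Osgood characterization (Theorem~\ref{lem-0}) together with the V\"ais\"al\"a refinement that the additive constant may be taken to be zero, there is a constant $c_1=c_1(c)$ with $k_G(x,y)\le c_1\,j_G(x,y)$ for all $x,y\in G$. Chaining this with the inequality $j_G(x,y)\le\alpha_G(x,y)+\log 3$ from part~(1) gives $k_G(x,y)\le c_1(\alpha_G(x,y)+\log 3)$, as claimed. For the final ``moreover'' assertion, I would exhibit the slit space $G=\mathbb{R}^n\setminus\{0\}$ and show that no purely multiplicative bound $k_G\le c_2\,\alpha_G$ can hold: the key point is that $\partial G=\{0\}$ is a single point, so the Apollonian metric degenerates. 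Indeed, with only one boundary point the supremum defining $\alpha_G$ forces $a=b=0$, and the cross-ratio $|0,y,x,0|=\tfrac{|x|}{|y|}\cdot\tfrac{|y|}{|x|}=1$, giving $\alpha_G\equiv 0$ while $k_G$ is not identically zero. This collapse of $\alpha_G$ on a domain with a single boundary point is precisely what obstructs a multiplicative inequality and explains why the additive $\log 3$ in the comparison with $j_G$ cannot be removed.

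The main obstacle is the final counterexample rather than the inequalities: one must recognize that $A$-uniformity genuinely fails for $\mathbb{R}^n\setminus\{0\}$ because the Apollonian metric is only a pseudometric (in fact identically zero) when $\partial G$ lies in a hyperplane or, as here, reduces to a single point, so the interplay between the degeneracy of $\alpha_G$ and the non-degeneracy of $k_G$ is what must be made precise. Everything else is a routine splicing of the comparisons from Remark~\ref{rem2}, Theorem~\ref{lem-0}, and \cite[Theorem~3.9]{Vai1}.
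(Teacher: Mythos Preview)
Your approach is correct and matches the paper's implicit argument: parts~(1) and~(2) are cited results, and part~(3) is obtained by chaining Theorem~\ref{lem-0} (in the form with zero additive constant) with the inequality $j_G\le\alpha_G+\log 3$ from part~(1). The paper gives no further proof of this lemma.

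One small inaccuracy in your counterexample: under the paper's convention the Apollonian metric is defined with boundary points taken in $\overline{\mathbb{R}^n}$ (the clause ``if $a=\infty$ then we set $|a-x|/|a-y|=1$'' makes this explicit), so for $G=\mathbb{R}^n\setminus\{0\}$ one has $\partial G=\{0,\infty\}$, not just $\{0\}$. Consequently $\alpha_G$ is not identically zero; a short computation gives $\alpha_G(x,y)=\bigl|\log(|x|/|y|)\bigr|$. Your conclusion is unaffected, however: choose any distinct $x,y$ with $|x|=|y|$, so that $\alpha_G(x,y)=0$ while $k_G(x,y)>0$, and no multiplicative bound $k_G\le c_2\,\alpha_G$ can hold. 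The underlying reason is still the one you identify---the complement $\{0,\infty\}$ lies in a sphere, so $\alpha_G$ is only a pseudometric---but the degeneracy occurs on level spheres $|x|=\text{const}$ rather than globally.
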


P. H\"ast\"o \cite{H1} investigated domains $G$ for which $\alpha_G(x,y) \ge K  j_G(x,y)\,.$
He found a sufficient condition on the domain under which this holds, in particular this
sufficient condition fails for $\mathbb{R}^n \setminus \{0\}$ and requires that the boundary of
the domain is "thick".

\begin{lemma}\label{lem-4}
Let $G, G'\subsetneq \mathbb{R}^n$ be domains and let a homeomorphism $f:G\to G'$ be an $(M,C)$-roughly Apollonian bilipschitz mapping. Then $f:G\to G'$ is an $(M',C')$-CQH with $(M',C')$ depending on $M, C$ and $n$ only.
\end{lemma}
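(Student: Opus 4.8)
The plan is to prove only the upper bound $k_{G'}(f(x),f(y))\le M'k_G(x,y)+C'$; the lower bound then follows by applying the same estimate to the inverse map $f^{-1}\colon G'\to G$, which is again $(M,C)$-roughly Apollonian bilipschitz because the defining two-sided inequality is symmetric in $G$ and $G'$. Thus the whole statement reduces to a one-sided quasihyperbolic bound under the Apollonian hypothesis, with the constants tracked through $M,C,n$. Throughout I will freely use Lemma~\ref{lem-3}(1) (giving $\alpha_G\le 2k_G$ and $j_{G'}\le\alpha_{G'}+\log 3$), the naturality of proper domains from Lemma~\ref{lem-2}, and the identity $r_{G'}(p,q)=e^{j_{G'}(p,q)}-1$ coming directly from the definitions of $j_{G'}$ and $r_{G'}$.

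The naive attempt---bound $\alpha_{G'}(f(x),f(y))\le M\alpha_G(x,y)+C\le 2Mk_G(x,y)+C$, pass to $j_{G'}$, convert to $r_{G'}=e^{j_{G'}}-1$, then apply naturality to recover $k_{G'}$---fails, since naturality costs a factor $\psi_n(t)\approx t^n$ and the exponential term $r_{G'}\approx e^{2Mk_G}$ would yield a bound exponential in $k_G(x,y)$ rather than the required linear one. The device that defeats this is subdivision into pieces of bounded quasihyperbolic length. Concretely, given $x,y\in G$, I would choose a rectifiable path $\gamma$ joining them in $G$ with $\ell_k(\gamma)\le k_G(x,y)+1$, parametrize it by quasihyperbolic arclength, and cut it into consecutive subarcs $\gamma_1,\dots,\gamma_N$ of $k$-length at most $1$, with breakpoints $x=z_0,z_1,\dots,z_N=y$, so that $N\le k_G(x,y)+2$.

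On each subarc the Apollonian distortion is uniformly bounded. Any two points $w,w'\in\gamma_i$ satisfy $k_G(w,w')\le\ell_k(\gamma_i)\le 1$, hence $\alpha_G(w,w')\le 2k_G(w,w')\le 2$ by Lemma~\ref{lem-3}(1); the Apollonian hypothesis gives $\alpha_{G'}(f(w),f(w'))\le 2M+C$, so $j_{G'}(f(w),f(w'))\le 2M+C+\log 3$ and therefore $r_{G'}(f(w),f(w'))\le 3e^{2M+C}-1=:R_0$. Taking the supremum over $w,w'$ shows that the connected set $A_i:=f(\gamma_i)$ has $r_{G'}(A_i)\le R_0$, whence naturality of the proper domain $G'$ (Lemma~\ref{lem-2}) bounds its quasihyperbolic diameter by $k_{G'}(A_i)\le\psi_n(R_0)=:K_0$, a constant depending only on $n,M,C$. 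In particular $k_{G'}(f(z_{i-1}),f(z_i))\le K_0$ for each $i$.

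Summing along the chain by the triangle inequality gives $k_{G'}(f(x),f(y))\le\sum_{i=1}^N k_{G'}(f(z_{i-1}),f(z_i))\le NK_0\le K_0\,k_G(x,y)+2K_0$, the desired linear bound with $M'=K_0$ and $C'=2K_0$; applying it to $f^{-1}$ supplies the matching lower bound and hence the full CQH property, with constants depending only on $M,C,n$. I expect the main obstacle to be exactly the passage from the exponential relation between $r_{G'}$ and $\alpha_{G'}$ to a \emph{linear} quasihyperbolic estimate: once one realizes that naturality must be applied piecewise on unit-$k$ segments---where the Apollonian image distortion, and hence $r_{G'}$, stays bounded by $R_0$---the remaining steps are routine bookkeeping.
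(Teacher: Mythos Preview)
Your proof is correct and follows essentially the same strategy as the paper: bound the quasihyperbolic distortion on pieces of small $k$-length via Lemma~\ref{lem-3}(1), the Apollonian hypothesis, and naturality (Lemma~\ref{lem-2}), then chain along a $k$-geodesic. The only differences are cosmetic: the paper invokes \cite[Lemma~2.3]{Vai1} (the standard ``quasiconvexity reduces CQH to a small-scale bound'' lemma) in place of your explicit subdivision, and on the small-scale step it uses the Euclidean segment $[x,y]$ (which lies in $G$ once $k_G(x,y)\le 1/8$) as the connected set to which naturality is applied, rather than your subarc $\gamma_i$ of a near-$k$-geodesic.
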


\begin{proof} We may assume that there are constants $M\geq 1$ and $C\geq 0$ such that $f:G\to G'$ is $(M,C)$-roughly Apollonian bilipschitz. Thanks to \cite[Lemma $2.3$]{Vai1} and by symmetry, we only need to estimate $k_{G'}(f(x),f(y))$ for all $x,y\in G$ with $k(x,y)\leq \frac{1}{8}$, because $(G,k_G)$ and $(G',k_{G'})$ are geodesic metric spaces and evidently $c$-quasi-convex with $c=1$.

Towards this end, first by Lemma \ref{lem-3}, we have $$r_G(x,y)=\frac{|x-y|}{\min\{d_G(x),d_G(y)\}}\leq 2 k_G(x,y)\leq \frac{1}{4},$$ and so the segment $A=[x,y]\subset G$. Moreover, we have
$$r_G(A)\leq \frac{\diam(A)}{\dist(A,\partial G)}\leq \frac{r_G(x,y)}{1-r_G(x,y)}\leq 1/2.$$
Then for all $a,b\in A$, we get $$r_G(a,b)\leq r_G(A)\leq 1/2,$$ which, together with Lemma \ref{lem-3}, implies that
$$\alpha_G(a,b)\leq 2j_G(a,b)=2\log(1+r_G(a,b))< 2\log 2.$$
Furthermore, on one hand, since $f:G\to G'$ is $(M,C)$--roughly Apollonian bilipschitz, we have
$$\alpha_{G'}(f(a),f(b))\leq M\alpha_G(a,b)+C< 2M\log 2+C,$$
On the other hand, again by using Lemma \ref{lem-3}, we obtain
$$r_{G'}(f(a),f(b))= e^{j_{G'}(f(a),f(b))}-1\leq e^{2M\log 2+C+\log3}-1=:L,$$
so $r_{G'}(f(A))\leq L$.
Hence we see from Lemma \ref{lem-2} that there is an increasing function $\psi_n:[0,\infty)\to [0,\infty)$ such that
$$k_{G'}(f(x),f(y))\leq k_{G'}(f(A))\leq \psi_n(r_{G'}(f(A))\leq \psi_n(L).$$
The proof is complete.
\end{proof}

\begin{lemma}\label{lem-4-1}
Suppose that $f:G\to G'$ is a $\theta$-quasim\"obius homeomorphism between two proper domains of $\mathbb{R}^n$, then $f$ is an $(M,C)$-roughly Apollonian bilipschitz mapping with $M$, $C$ depending only on $\theta$.
\end{lemma}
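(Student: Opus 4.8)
The goal is to show that a $\theta$-quasim\"obius homeomorphism $f:G\to G'$ is $(M,C)$-roughly Apollonian bilipschitz, with $M,C$ depending only on $\theta$. The natural approach is to work directly from the definition of the Apollonian metric in terms of the absolute ratio (cross-ratio) of quadruples, since the Apollonian metric is built precisely out of the same cross-ratios that quasim\"obius maps control. The plan is to establish one inequality $\alpha_{G'}(f(x),f(y))\le M\alpha_G(x,y)+C$ directly from the quasim\"obius condition, and then to obtain the lower inequality by applying the same reasoning to the inverse map $f^{-1}$, which by the Remark after Definition \ref{def2'} is $\theta'$-quasim\"obius with $\theta'$ determined by $\theta$.

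For the upper estimate, I would fix $x,y\in G$ and recall that
$$\alpha_{G'}(f(x),f(y))=\log\sup_{a',b'\in\partial G'}|a',f(y),f(x),b'|.$$
The key observation is that since $f$ is a homeomorphism of $G$ onto $G'$ extending appropriately to boundaries, boundary points $a',b'\in\partial G'$ arise as limits of images of boundary points of $G$; more carefully, for any $a',b'\in\partial G'$ one can choose approximating sequences and use the quasim\"obius inequality on the quadruples $(a,f(y),f(x),b)$ to transfer the cross-ratio bound. Applying $\theta$-quasim\"obius to the quadruple $Q=(a,y,x,b)$ gives $\tau(fQ)\le\theta(\tau(Q))$, i.e. $|f(a),f(y),f(x),f(b)|\le\theta(|a,y,x,b|)$. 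Taking suprema over $a,b\in\partial G$ and using that $\sup_{a,b}|a,y,x,b|=e^{\alpha_G(x,y)}$, I would arrive at a bound of the form $e^{\alpha_{G'}(f(x),f(y))}\le\theta(e^{\alpha_G(x,y)})$, hence $\alpha_{G'}(f(x),f(y))\le\log\theta(e^{\alpha_G(x,y)})$. The remaining task is to convert the bound $\log\theta(e^{t})$ into an affine estimate $Mt+C$; this is routine if $\theta$ is assumed power quasim\"obius (as in Definition \ref{def2'}), giving $\log\theta(e^t)\le\log C_0+\max\{\lambda t,t/\lambda\}\le \lambda t+\log C_0$ for $t\ge 0$, so one may take $M=\lambda$ and $C=\log C_0$. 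For a general homeomorphism $\theta$ one needs an auxiliary argument (e.g. using that a quasim\"obius map between domains with nondegenerate boundary is in fact power quasim\"obius, by results of V\"ais\"al\"a \cite{Vai2}) to reduce to the power case.

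The lower bound follows symmetrically: since $f^{-1}$ is $\theta'$-quasim\"obius, the upper-bound argument applied to $f^{-1}$ yields $\alpha_G(x,y)\le\log\theta'(e^{\alpha_{G'}(f(x),f(y))})\le M'\alpha_{G'}(f(x),f(y))+C'$, which rearranges to the desired $(\alpha_G(x,y)-C')/M'\le\alpha_{G'}(f(x),f(y))$. Absorbing the two pairs of constants into a single $(M,C)$ depending only on $\theta$ completes the argument.

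The main obstacle I anticipate is the boundary-correspondence issue: the Apollonian metric is defined via a supremum over boundary points, whereas the quasim\"obius inequality is stated for quadruples of points in the space $X_0\subset\dot X$. Making the passage from $\partial G$ to $\partial G'$ rigorous requires knowing that $f$ extends to a boundary homeomorphism (or at least that boundary points of $G'$ are approximated by $f$-images of points converging to $\partial G$), together with a continuity/limiting argument for the cross-ratio to ensure the suprema are genuinely comparable. A second, more technical point is the conversion from the multiplicative quasim\"obius control $\log\theta(e^t)$ to the additive affine form $Mt+C$, which is transparent in the power-quasim\"obius case but in general relies on invoking that quasim\"obius embeddings of domains with thick boundary are automatically power quasim\"obius.
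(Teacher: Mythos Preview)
Your proposal is correct and follows essentially the same route as the paper: extend $f$ to the closure via \cite[Theorem~3.19]{Vai2}, reduce to the power-quasim\"obius case, and then read off the affine bound $\alpha_{G'}(f(x),f(y))\le\lambda\,\alpha_G(x,y)+\log C_0$ directly from the cross-ratio inequality, with the lower bound obtained by symmetry from $f^{-1}$. The only point where the paper is more explicit than your sketch is the ``auxiliary argument'' you allude to for upgrading a general $\theta$-QM map to a power-QM one: rather than citing a black-box result, the paper normalises so that $0\in\partial G$ and $\overline f(0)\in\{0,\infty\}$, composes with the inversion $u(x)=x/|x|^2$ to obtain a $\theta$-QM map $g$ with $g(\infty)=\infty$, invokes \cite[Theorem~3.10]{Vai2} to conclude $g$ is $\theta$-QS, and then \cite[Corollary~3.12]{TV} to obtain the power form $\theta_1(t)=C\max\{t^\lambda,t^{1/\lambda}\}$; undoing the M\"obius compositions gives that $f$ itself is $\theta_1$-QM.
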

\begin{proof}  We first observe from \cite[Theorem 3.19]{Vai2} that $f$ has a quasim\"obius extension $\overline{f}: \overline{G}\to \overline{G'}$. To show that $f$ is roughly Apollonian bilipschitz, we only need to prove that $f$ is power quasim\"obius, that is, there exist constants $C\geq 1$ and $\lambda\geq 1$ depending only on $\theta$ such that $f$ is $\theta_1$-QM  with $\theta_1(t)=C\max\{t^{\lambda},t^{1/\lambda}\}$. Indeed, this can be seen as follows. For any $x,y \in G$ and $a,b\in\partial G$, we note that
$$|a,y,x,b|=|b,y,x,a|^{-1}\;\;\; \mbox{and}\;\;\; \alpha_G(x,y)=\log\sup_{a,b\in \partial G}|a,y,x,b|.$$
Without loss of generality we may assume that $|a,y,x,b|\geq1$. Since $f$ is $\theta_1$-QM with $\theta_1(t)=C\max\{t^{\lambda},t^{1/\lambda}\}$, we have
$$\log|f(a),f(y),f(x),f(b)|\leq \lambda \log |a,y,x,b|+\log C \leq\lambda \alpha_G(x,y)+\log C,$$
so by the arbitrariness of $a,b\in \partial G$, we get
$$\alpha_{G'}(f(x),f(y))\leq \lambda\alpha_G(x,y)+\log C.$$
Since the inverse map of power quasim\"obius is also power quasim\"obius, by symmetry, the assertion follows.

To this end, by auxiliary translations we may assume that $0\in\partial G$ and that either $\overline{f}(0)=0$ or $\overline{f}(0)=\infty$. Let $u$ be the inversion $u(x)=\frac{x}{|x|^2}$. We  note from Remark \ref{rem1} that $u$ is M\"obius. If $\overline{f}(0)=0$, we define $g:u(G)\to u(G')$ by $g(x)=u\circ f\circ u(x)$. If $\overline{f}(0)=\infty$, we define  $g:u(G)\to G'$ by $g(x)=f\circ u(x)$. In both cases, we have that $g$ is $\theta$-QM. Since $g(x)\rightarrow \infty$ as $x\rightarrow\infty$, $g$ is $\theta$-QS, see \cite[Theorem 3.10]{Vai2}. Moreover, by \cite[Corollary 3,12]{TV} we have that $g$ is $\theta_1$-QS with $\theta_1(t)=C\max\{t^{\lambda},t^{1/\lambda}\}$, where $C\geq 1$ and $\lambda\geq 1$ depend only on $\theta$. Hence, we get that $f$ is $\theta_1$-QM with $\theta_1(t)=C\max\{t^{\lambda},t^{1/\lambda}\}$ as desired.

Hence the proof of this Lemma is complete.\end{proof}


\begin{nonsec}{\bf The proof of Theorem \ref{cor}.\,}
$(1)\Rightarrow(2):$ This implication follows from \cite[Proposition 6.6]{H1}.\end{nonsec}

$(2)\Rightarrow(3):$ It follows from the assumption, \cite[Corollary $5.4$]{H1} and \cite[Proposition 6.6]{H1} that there is $M\geq 1$ such that $f$ is $M$-bilipschitz with respect to the quasihyperbolic metrics. Let $x_0\in G$ and $r\in(0,\frac{d_G(x_0)}{2M})$. For all $x,y\in \mathbb{S}^{n-1}(x_0,r)$, we have $k_G(x,x_0)\leq \frac{1}{M}$ by means of \cite[Theorem $3.9$]{Vai1}, and so $k_{G'}(f(x),f(x_0))\leq 1$. Again by \cite[Theorem $3.9$]{Vai1}, we obtain that
$$\frac{|f(x)-f(x_0)|}{2d_{G'}(f(x_0))}\leq k_{G'}(f(x),f(x_0))\leq \frac{2|f(x)-f(x_0)|}{d_{G'}(f(x_0))}.$$
Hence we have
\begin{eqnarray*}\limsup_{r\to 0+}\frac{|f(x)-f(x_0)|}{|f(y)-f(x_0)|}&\leq& \limsup_{r\to 0+}\frac{4k_{G'}(f(x),f(x_0))}{k_{G'}(f(y),f(x_0))}\\\nonumber&\leq& \limsup_{r\to 0+}\frac{4M^2 k_G(x,x_0)}{k_G(y,x_0)}\leq 16M^2,\end{eqnarray*}
as desired.

$(3)\Rightarrow(1):$ This implication follows from Theorem \ref{z-1}.

$(3)\Rightarrow(4):$ Assume that $f$ is quasiconformal in $G$, then Theorem \ref{z-1} yields that $f(G)$ is $A$-uniform. Hence, we get from the fact ``an $A$-uniform domain is uniform" and \cite[Theorem 5.6]{Vai2} that $f$ is quasim\"obius, as desired.

$(4)\Rightarrow(3):$ This implication follows from \cite[Theorem $5.2$]{Vai2}.

\begin{nonsec}{\bf The proof of Theorem \ref{thm-1}.\,}
 Let $f:G\to G'$ be roughly Apollonian bilipschitz. Then by Lemma \ref{lem-4} we may assume that $f$ is $(M,C)$-roughly Apollonian bilipschitz and  $(M,C)$-CQH for some constants $M\geq 1$ and $C\geq 0$. Hence, we see from  Lemma \ref{lem-3}  that  \begin{equation}\label{eq-thm1-1}j_G(x,y)\leq  2Mj_{G'}(f(x),f(y))+C+\log 3.\end{equation}  We first prove part (1), that is, if $G$ is uniform, then $G'$ is uniform. Suppose that $G$ is $c$-uniform for some constant $c\geq 1$. According to Theorem \ref{lem-0}, there exist positive constants $c_1$ and $c_2$ such that $$k_G(x,y)\leq c_1j_G(x,y)+c_2$$ for all $x,y\in G$. One computes from these facts that
\begin{eqnarray*} k_{G'}(f(x),f(y)) &\leq& Mk_G(x,y)+C
\\&\leq& M[c_1j_G(x,y)+c_2]+C
\\&\leq& c_1M[Mj_{G'}(f(x),f(y))+C+\log3]+c_2M+C.\end{eqnarray*}
Again by Theorem \ref{lem-0}, we immediately see that $G'=f(G)$ is uniform. Hence, part (1) holds.\end{nonsec}

Next, we prove part (2). Assume that $G$ is $\varphi$-uniform,  to prove $G'$ is $\varphi'$-uniform, we only need to find a homeomorphism $\varphi':[0,\infty)\to [0,\infty)$ such that $$k_{G'}(f(x),f(y))\leq \varphi'(r_{G'}(f(x),f(y)))$$ for all $x,y\in G$.
To this end, we divide the proof into two cases.

{\em Case A.\,} $|f(x)-f(y)|\leq \frac{1}{2}\min\{d_{G'}(f(x)),d_{G'}(f(y))\}$.

Then by Lemma \ref{lem-3} we have
$$k_{G'}(f(x),f(y))\leq 2\frac{|f(x)-f(y)|}{d_{G'}(f(x))}\leq 2r_{G'}(f(x),f(y)),$$ which give the desired $\varphi'$ with $\varphi'(t)=2t$.

{\em Case B.\,} $|f(x)-f(y)|> \frac{1}{2}\min\{d_{G'}(f(x)),d_{G'}(f(y))\}$.
Then $$j_{G'}(f(x),f(y))=\log(1+\frac{|f(x)-f(y)|}{\min\{d_{G'}(f(x)),d_{G'}(f(y))\}})
>\log\frac{3}{2}.$$ Let $\varphi_1(t)=\varphi(e^t-1)$. Then we see from \eqref{eq-thm1-1} that
\begin{eqnarray*} k_{G'}(f(x),f(y)) &\leq& M k_G(x,y)+C
 \leq  M\varphi_1(j_G(x,y))+C
 \\&\leq& M\varphi_1(2Mj_{G'}(f(x),f(y))+C+\log 3)))+C\\&\leq& M\varphi_1\left((2M+\frac{C+\log3}{\log\frac{3}{2}})j_{G'}(f(x),f(y))\right)\\&+&\frac{C}{\log\frac{3}{2}}j_{G'}(f(x),f(y)).\end{eqnarray*}
By letting $\varphi'(t)=M\varphi_1\left((2M+\frac{C+\log3}{\log\frac{3}{2}})\log(1+t)\right)+\frac{C}{\log\frac{3}{2}}\log(1+t)$, we complete the proof in this case.

Combining Case A and Case B, we complete the proof of part (2).

Finally, we come to prove part (3). It follows from Lemma \ref{lem-4} and the fact that a Gromov hyperbolic domain under a CQH homeomorphism  is still Gromov hyperbolic, see \cite{BHK} (or \cite[Theorem 3.18]{Vai10}).\qed

\begin{nonsec}{\bf Remark.\,}
Let $G$ be a proper domain of $\mathbb{R}^n$. We consider the Apollonian metric $\alpha_G$, Seittenranta's metric $\delta_G$,
the metric $h_{G,c}$ $(c\geq 2)$
and $j_G$ metric. We see from Remark \ref{rem2} and the proof of  Lemma \ref{lem-4} that if we replace the Apollonian metric by  $m_G\in\{j_G, \delta_G,h_{G,c}\}$, then we have the following holds: {\it If $f: (G, m_G)\to (G', m_{G'})$ is an $M$-roughly $C$-bilipschitz mapping, then $f:G\to G'$ is an $(M',C')$-CQH with $(M',C')$ depending on $M, C$ and $n$ only.}  Hence, we easily see that Theorem \ref{thm-1} is also true if we replace the Apollonian metric by  $m_G\in\{j_G, \delta_G,h_{G,c}\}$.\\

\end{nonsec}


\begin{nonsec}{\bf The proof of Theorem \ref{thm-2}.\,} The equivalence of $(1)\Rightarrow(2)\Rightarrow(4)\Rightarrow(1)$ are from Theorem \ref{thm-1}, \cite[Theorem 5.6]{Vai2} and Lemma \ref{lem-4-1}. It remains to show $(3)\Leftrightarrow(4)$.\end{nonsec}

The implication $(4)\Rightarrow(3)$ follows from \cite[Theorem 3.19]{Vai2};

$(3)\Rightarrow(4):$ this implication follows from \cite[Theorem $3.15$]{Vai-3}.

\begin{nonsec}{\bf The proof of Theorem \ref{thm-3}.\,}The proof  follows from  Theorem \ref{thm-1} and  Lemma \ref{lem-4-1}.\end{nonsec}

\bigskip

\end{document}